\theoremstyle{theorem}
\newtheorem{theorem}{Theorem}[section]
\newtheorem{lemma}[theorem]{Lemma}
\newtheorem{corollary}[theorem]{Corollary}
\newtheorem{proposition}[theorem]{Proposition}
\theoremstyle{definition}
\newtheorem{definition}{Definition}
\newtheorem{remark}{Remark}
\newtheorem{example}{Example}
\newcommand{\PDef}{\operatorname{PDef}}
\renewcommand{\AA}{\mathbb A}
\newcommand{\RR}{\mathbb R}
\newcommand{\ZZ}{\mathbb Z}
\newcommand{\PP}{\mathbb P}
\newcommand{\spec}{\mathrm{Spec}}
\newcommand{\lan}{\langle}
\newcommand{\ran}{\rangle}
\renewcommand{\int}{\operatorname{int}}
\newcommand{\lam}{\lambda}
\newcommand{\Def}{\operatorname{Def}}
\renewcommand{\sp}{\operatorname{Span}}
\newcommand{\cA}{\mathcal{A}}
\newcommand{\cG}{\mathcal{G}}
\newcommand{\cS}{\mathcal{S}}
\newcommand{\fg}{\mathfrak{g}}
\newcommand{\fh}{\mathfrak{h}}
\newcommand{\MC}{\operatorname{MC}}
\newcommand{\G}{\operatorname{G}}
\newcommand{\HH}{\operatorname{HH}}
\newcommand{\sgn}{\operatorname{sgn}}
\renewcommand{\ker}{\operatorname{ker}}
\newcommand{\coker}{\operatorname{coker}}
\newcommand{\kss}{\scriptscriptstyle}
\newcommand{\kbb}{{\kss \bullet}}
\begin{document}

\title[Dgla controlling the Poisson deformations]{A Differential graded Lie algebra controlling the Poisson deformations of an affine Poisson variety}
\author{Matej Filip}
\address{JGU Mainz, Institut fur Mathematik, Staudingerweg 9, 55099 Mainz, Germany}
\email{mfilip@uni-mainz.de}

\begin{abstract}
We construct a differential graded Lie algebra $\fg$ controlling the Poisson deformations of an  affine Poisson variety. 
We analyse $\fg$ in the case of affine Gorenstein toric Poisson varieties. Moreover, explicit description of the second and third Hochschild cohomology groups is given for three-dimensional affine Gorenstein toric varieties.
\end{abstract}

\keywords{Poisson cohomology, Hochschild cohomology, toric varieties, deformations of toric singularities}
\subjclass[2010]{13D03, 13D10, 14B05, 14B07, 14M25}

\maketitle

\section{Introduction}
In the last decades differential graded Lie algebras have become a very important tool in deformation theory. A deformation problem is controlled by a differential graded Lie algebra $\fh$ if its corresponding functor of Artin rings is isomorphic to the deformation functor of $\fh$. In characteristic $0$ every deformation functor is controlled by a differential graded Lie algebra, due to Quillen, Deligne, Drinfeld and Kontsevich.
It is well known that  associative
non-commutative (resp. commutative) deformations of affine varieties are controlled by the Hochschild (resp. Harrison) differential graded Lie algebra.

In recent years there has been a lot of interest in Poisson deformations, i.e. in deformations of a pair consisting of a variety and a Poisson structure on it (see \cite{fre}, \cite{kalgin}, \cite{nam1}, \cite{nam2}, \cite{nam3}). 

In this paper we construct a differential graded Lie algebra $\fg$ controlling the Poisson deformations of an affine Poisson variety $\spec(A)$.  
We see that the Poisson cohomology groups $H^k(\fg)$ are related to some parts of the Hodge decomposition of Hochschild cohomology groups $\HH^n(A)$ (see e.g. \cite{ger-sch} for definition of the Hodge decomposition). In the case of affine toric varieties we gave a convex geometric description of these parts in \cite{fil}.

The paper is organized as follows. In Section 2 we recall basic deformation theory via differential graded Lie algebras and basic results about the Hodge decomposition of Hochschild cohomology groups.
The first main result of this paper is a construction of a differential graded Lie algebra $\fg$ controlling the Poisson deformations, which is done in Section 3. We notice that for the computation of the Poisson cohomology groups $H^k(\fg)$ parts of the Hodge decomposition of the Hochschild cohomology are relevant (see \eqref{eq: sp sequence 1}). 
The Poisson cohomology groups for affine Poisson Gorenstein toric surfaces are computed  in Subsection \ref{subsec pois 42}. 
Using results in \cite{fil} we explicitly compute some parts of the Hochschild cohomology groups in the case of three-dimensional affine Gorenstein toric varieties, which is our second main result (see Theorem \ref{cor prop gor}) obtained in Subsection \ref{subsection 43}. This result also reproves and generalizes \cite[Theorem 4.4]{alt}. In particular, a complete description of the second (which describes the first order deformations) and third Hochschild cohomology group (which contains the obstructions for extending deformations to larger base spaces) is given (see Corollary \ref{last zad cor}).

\section{Preliminaries}
\subsection{Deformation theory via differential graded Lie algebras}

Let $k$ be a field of characteristic $0$ and 
let $\cA$ be the category of local Artinian $k$-algebras with residue field $k$ (with local homomorphisms as morphisms). By $\cS$ we denote the category of sets. If not otherwise specified a tensor product $\otimes$ means $\otimes_k$. Let $\fg$ be a differential graded Lie algebra (dgla for short). 
By $\fg^i$ we denote the degree $i$ elements of $\fg$.

\begin{definition}
For a dgla $\fg$  we define the functor
$\text{MC}_{\fg}:\cA\to \cS$ by
$$B\mapsto \big\{ x\in \fg^1\otimes m_B~|~ d(x)+\frac{1}{2}[x,x]=0  \big\}.$$
\end{definition}
$\text{MC}_{\fg}$ is said to be the \emph{Maurer-Cartan functor} associated to $\fg$. Elements in $\MC_{\fg}(B)$ are the Maurer-Cartan elements of the dgla $\fg\otimes B$. 
\begin{definition}
Let $\cG$ denote the category of groups. Let $\fg$ be a dgla and define the functor 
$\G_{\fg}:\cA\to \cG$ given by
$$B\mapsto \text{exp}(\fg^0\otimes m_B),$$
\end{definition}
where $\text{exp}$ is the standard exponential functor on Lie algebras. $\G_{\fg}$ is said to be the \emph{gauge functor} associated to $\fg$.

Fix a dgla $\fg$ over $k$; the gauge functor $G_{\fg}$ acts naturally on the Maurer-Cartan functor $\text{MC}_{\fg}$ by the formula
$$\text{G}_{\fg}(B)\times \text{MC}_{\fg}(B)\to \text{MC}_{\fg}(B)$$
$$(e^b,x)\mapsto x+\sum_{n=0}^{\infty}\frac{[b,\cdot]^n}{(n+1)!}([b,x]-d(b)).$$
This action is called the \emph{gauge action}.

\begin{definition}
Let $\fg$ be a dgla over $k$. The \emph{deformation functor} associated to $\fg$ is the functor 
$\text{Def}_{\fg}: \cA\to \cS$ given by
$$B\mapsto \frac{\text{MC}_{\fg}(B)}{G_{\fg}(B)}.$$
\end{definition}
We say that a dgla $\fg$ \emph{controls} a functor $F$ if $\Def_{\fg}\cong F$ holds.

\subsection{The Hodge decomposition of the Hochschild cohomology}

Let $A$ be a finitely generated $k$-algebra. 
Let $C^{\kbb}(A)$ be the Hochschild cochain complex, i.e., $C^n(A)$ is the space of $k$-linear maps $f:A^{\otimes n}\to A$ (or $A$-module homomorphisms $A\otimes A^{\otimes n}\to A$) with the differential given by 
$$
\begin{array}{ll}
(df)(a_1\otimes \cdots \otimes a_n):=&a_1f(a_2\otimes \cdots \otimes a_n)+\\
&\sum^{n-1}_{i=1}(-1)^{i}f(a_1\otimes \cdots \otimes a_ia_{i+1}\otimes \cdots \otimes a_n)+\\
&(-1)^nf(a_1\otimes \cdots \otimes a_{n-1})a_n.
\end{array}
$$
The $n$-th cohomology group of this complex is called the $n$-th \emph{Hochschild cohomology group}, denoted by $\HH^n(A)$.
The Lie bracket on $C^{\kbb}(A)[1]$ is coming from the \emph{Gerstenhaber bracket} $[f,g]$ of $f\in C^m(A)$, $g\in C^n(A)$, which is defined by 
$$[f,g]:=f\circ g-(-1)^{(m+1)(n+1)}g\circ f\in C^{m+n-1}(A),$$
where 
$$(f\circ g)(a_1\otimes \cdots \otimes a_{m+n-1}):=$$
$$\sum_{i=1}^m(-1)^{(i-1)(n+1)}f(a_1\otimes \cdots \otimes a_{i-1}\otimes g(a_i\otimes \cdots \otimes a_{i+n-1})\otimes a_{i+n}\otimes \cdots \otimes a_{m+n-1}).$$
The Gerstenhaber bracket equips $C^{\kbb}(A)[1]$ with the structure of a dgla.

Gerstenhaber and Schack described the Hodge decomposition of the Hochschild (co-)homology that we will briefly recall (see \cite{ger-sch} for more details). 
In the group ring of the permutation group $S_n$ one defines $s_{i,n-i}$ to be $\sum (\sgn \pi)\pi$, where the sum is taken over those permutations $\pi\in S_n$ such that $\pi (1) <\pi (2)<\cdots<\pi (i)$ and $\pi(i+1)<\pi(i+2)<\cdots <\pi(n)$. 
Let $s_n=\sum_{i=1}^{n-1}s_{i,n-i}$.
It holds that $C^n(A)=C^n_{(1)}(A)\oplus \cdots \oplus C^n_{(n)}(A)$, where 
$C^n_{(i)}(A)=\{f\in C^n(A)~|~f\circ s_n=(2^{i}-2)f\}$.
The \emph{Hodge decomposition} is
$$\HH^n(A)\cong H^n_{(1)}(A)\oplus \cdots \oplus H^n_{(n)}(A),$$
where $H^n_{(i)}(A)$  is the $n$-th cohomology of $C^{\kbb}_{(i)}(A)$.

We denote the projectors of $\HH^n(A)$ to $H^n_{(i)}(A)$ by $e_n(i)$.
\begin{lemma}\label{lem pal lem}
For an element $p\in H^2_{(2)}(A)$ and an element $q\in H^2_{(1)}(A)$ we have the following: 
\begin{itemize}
\item the equation $e_3(3)[p,p]=0$ is the Jacobi identity, $e_3(2)[p,p]=0$
\item $[p,q]=e_3(2)[p,q]$ and $[q,q]=e_3(1)[q,q]$.
\end{itemize}
\end{lemma}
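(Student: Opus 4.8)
The plan is to reduce all four assertions to explicit identities of $3$-cochains. First I would recall the relevant low-degree Hodge structure: $C^2_{(1)}(A)$ is the space of symmetric Hochschild $2$-cochains and $C^2_{(2)}(A)$ that of the antisymmetric ones, $C^2_{(3)}(A)=0$, and the Hochschild differential is homogeneous for the Hodge grading; hence a class in $H^2_{(1)}(A)$ (resp.\ $H^2_{(2)}(A)$) has a symmetric (resp.\ antisymmetric) Hochschild $2$-cocycle representative, and I write $p,q$ for such choices. I also use that $e_3(3)$ is the antisymmetriser $f\mapsto\frac{1}{6}\sum_{\sigma\in S_3}\sgn(\sigma)(f\circ\sigma)$, and that, by the definition of the Hodge pieces, $f\mapsto f\circ s_3$ acts on $C^3_{(i)}(A)$ as multiplication by $2^{i}-2$, i.e.\ by $0,2,6$ on the three Hodge summands of $C^3(A)$.

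Next I would unwind the Gerstenhaber bracket with the formulas of Section 2: since $(m+1)(n+1)$ is odd for $m=n=2$ one gets $[p,p]=2(p\circ p)$, $[q,q]=2(q\circ q)$ and $[p,q]=p\circ q+q\circ p$, with $(p\circ p)(a,b,c)=p(p(a,b),c)-p(a,p(b,c))$ and similarly for the others. For the third bullet, a direct check using only symmetry of $q$ gives $(q\circ q)\circ s_3=0$, hence $[q,q]\in C^3_{(1)}(A)$, i.e.\ $[q,q]=e_3(1)[q,q]$. For the second bullet, a similar but longer computation using symmetry of $q$ and antisymmetry of $p$ gives $(p\circ q+q\circ p)\circ s_3=2(p\circ q+q\circ p)$, hence $[p,q]\in C^3_{(2)}(A)$, i.e.\ $[p,q]=e_3(2)[p,q]$. (Both are instances of the compatibility $[C^m_{(i)},C^n_{(j)}]\subseteq\bigoplus_{l\le i+j-1,\ l\equiv i+j-1\,(\mathrm{mod}\ 2)}C^{m+n-1}_{(l)}$ of the Gerstenhaber bracket with the $\lambda$-decomposition, but in these degrees the direct verification is shortest.)

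For the first bullet, set $J(a,b,c):=p(p(a,b),c)+p(p(b,c),a)+p(p(c,a),b)$, the Jacobiator of $p$, and compute, using antisymmetry of $p$, the two quantities $\mathrm{Alt}(p\circ p)=\frac{2}{3}J$ and $(p\circ p)\circ s_3=4J$ separately. The first gives $e_3(3)[p,p]=\mathrm{Alt}([p,p])=\frac{4}{3}J$, so the equation $e_3(3)[p,p]=0$ is precisely the Jacobi identity $J=0$; and as $C^2_{(3)}(A)=0$, the class $e_3(3)[p,p]\in H^3_{(3)}(A)$ vanishes iff the cochain $J$ does, so there is no coboundary slack in this identification. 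Combining the two computations: writing $[p,p]=e_3(1)[p,p]+e_3(2)[p,p]+e_3(3)[p,p]$, applying $f\mapsto f\circ s_3$ and inserting the eigenvalues $0,2,6$ gives $2\,e_3(2)[p,p]+6\cdot\frac{4}{3}J=[p,p]\circ s_3=8J$, whence $e_3(2)[p,p]=0$.

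The calculations are routine once the permutation-action conventions are fixed; the one point that is not a naive degree count is the vanishing $e_3(2)[p,p]=0$ — a ``parity'' phenomenon, namely that the weight-$2$ component of the self-bracket of a weight-$2$ class disappears — which the argument above extracts from the fact that both $(p\circ p)\circ s_3$ and the top Hodge component $\mathrm{Alt}(p\circ p)$ are controlled by the single cochain $J$. Finally, since $p$ and $q$ were chosen to be cocycles and $d$ is a derivation of the Gerstenhaber bracket, all of the cochain identities above are identities of cocycles, and they descend to the asserted identities in $\HH^3(A)$.
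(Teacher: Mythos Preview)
Your proposal is correct and is precisely the ``easy computation'' the paper alludes to (its own proof reads in full: \emph{An easy computation, see also \cite{pal2}}); you have simply carried it out explicitly, using the eigenvalue description of the Hodge pieces via $s_3$ and the antisymmetriser for $e_3(3)$. The one step worth highlighting is your extraction of $e_3(2)[p,p]=0$ by comparing $[p,p]\circ s_3$ with the eigenvalue expansion, which is a clean way to get the vanishing without computing the projector $e_3(2)$ directly.
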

\begin{proof}
An easy computation, see also \cite{pal2}.
\end{proof}

\section{Poisson deformations}
Poisson deformations are deformations of a pair consisting of a variety and a Poisson structure on it.
Lately there has been a lot of interest in these deformations, see for example results of Namikawa \cite{nam1},\cite{nam2}, \cite{nam3} or Kaledin and Ginzburg \cite{kalgin}. 

\begin{definition}
A skew-symmetric Hochschild $2$-cocycle $p$ (i.e. $p\in C^2_{(2)}(A)$ with $dp=0$) that satisfies the Jacobi identity
$$p(a\otimes p(b\otimes c))+p(b\otimes p(c\otimes a))+p(c\otimes p(a\otimes b))=0$$
is called an (algebraic) \emph{Poisson structure} (or a \emph{Poisson bracket}). A commutative algebra
together with a Poisson bracket is called a \emph{Poisson algebra}. Its spectrum is called an \emph{affine Poisson variety}.
\end{definition}

Note that $p\in H^2_{(2)}(A)\cong \hom_A(\Omega^2_{A|k},A)$ (see e.g. \cite{lod}), where $\Omega^2_{A|k}$ is the $2$-th exterior power of the module of Kähler differentials. Using Lemma \ref{lem pal lem} we can equivalently define the Poisson structure as an element $p\in C^2_{(2)}(A)$ with $dp=e_3(3)[p,p]=0$.

\begin{definition}\label{def iso fun pois}
A \emph{Poisson deformation} of a Poisson algebra $A$ over an Artin ring $B$ is a pair $(A',\pi)$, where $A'$ is a Poisson $B$-algebra and $\pi:A'\otimes_Bk\to A$ is an isomorphism of Poisson $k$-algebras. Two such deformations $(A',\pi_1)$ and $(A'',\pi_2)$ are \emph{equivalent} if there exists an isomorphism of Poisson $B$-algebras $\phi:A'\to A''$ such that it is compatible with $\pi_1$ and $\pi_2$, i.e. such that $\pi_1=\pi_2\circ (\phi\otimes_B k)$.
\end{definition}

\subsection{Dgla that controls the deformation problem}

A functor that encodes this deformation problem is
$$\PDef_A:\cA\to \cS$$
$$B\mapsto \{\text{Poisson deformations of }A \text{ over }B\}/\sim.$$

In the following we define a dgla that controls the above deformation problem.

Consider the Double complex 1.
\begin{figure}[!htb]
$$
\begindc{\commdiag}[550]
\obj(0,0)[00]{$C^1_{(1)}(A)$}
\obj(0,1)[01]{$C^2_{(1)}(A)$}
\obj(0,2)[02]{$C^3_{(1)}(A)$}
\obj(0,3)[03]{$\vdots$}
\obj(1,0)[10]{$C^2_{(2)}(A)$}
\obj(1,1)[11]{$C^3_{(2)}(A)$}
\obj(1,2)[12]{$C^4_{(2)}(A)$}
\obj(1,3)[13]{$\vdots$}
\obj(2,0)[20]{$C^3_{(3)}(A)$}
\obj(2,1)[21]{$C^4_{(3)}(A)$}
\obj(2,2)[22]{$C^5_{(3)}(A)$}
\obj(2,3)[23]{$\vdots$}
\obj(3,0)[30]{$\cdots$}
\obj(3,1)[31]{$\cdots$}
\obj(3,2)[32]{$\cdots$}
\mor{00}{01}{$d$}
\mor{01}{02}{$d$}
\mor{02}{03}{$d$}
\mor{00}{10}{$d_p$}
\mor{10}{20}{$d_p$}
\mor{01}{11}{$d_p$}
\mor{02}{12}{$d_p$}
\mor{11}{21}{$d_p$}
\mor{20}{21}{$d$}
\mor{21}{22}{$d$}
\mor{10}{11}{$d$}
\mor{11}{12}{$d$}
\mor{12}{13}{$d$}
\mor{22}{23}{$d$}
\mor{12}{22}{$d_p$}
\mor{20}{30}{$d_p$}
\mor{21}{31}{$d_p$}
\mor{22}{32}{$d_p$}
\enddc
$$
\caption{Double complex 1}
\end{figure}

The map $d_p$ is defined as $d_p:=-[\mu_p,\cdot]:C^n(A)\to C^{n+1}(A)$, where $\mu_p\in C^2_{(2)}(A)$ is a Poisson structure on $A$. In the double complex 1 we restrict $d_p$ on the chosen domains and codomains. Note that we have $d[\mu_p,f]=[\mu_p,df]$ (since $d\mu_p=0$) and thus we really obtain a double complex.    We denote its total complex  by $D^{\kbb}$.

We define the bracket $[~,~]_p$ on $D^{\kbb}$ as follows: let $C^n(A)=C^n_{(1)}(A)\oplus \cdots \oplus C^n_{(n)}(A)$ and define
$$[\cdot,\cdot]_p:C^m(A)\times C^n(A)\to C^{m+n-1}(A)$$
$$[(f_1,...,f_m),(g_1,...,g_n)]_p:=([f_1,g_1],...,\sum_{i+j=k}[f_i,g_j],...,[f_m,g_n]),$$
where we restrict $[f_i,g_j]$ to $C^{m+n-1}_{(i+j-1)}(A)$.

This bracket defines a dgla structure on $D^{\kbb}[1]$: 
the shifted differential $d_p[1]$ is equal to $[\mu_p,\cdot]_p$
and the shifted differential $d[1]$ is equal to $[\mu,\cdot]_p$, where $\mu$ is the commutative multiplication on $A$. We denote the shifted differential of $D^{\kbb}[1]$ by $\tilde{d}$. It is given by $\tilde{d}=[\mu+\mu_p,\cdot]_p$. We can immediately check that the bracket $[~,~]_p$ and differential $\tilde{d}$ equip $D^{\kbb}[1]$ with the structure of a dgla. We denote this dgla by $C^{\kbb}_p(A)[1]$.

\begin{remark}
Note that the Gerstenhaber bracket is in general not graded with respect to the Hodge decomposition and thus the above product is not the Gerstenhaber bracket. From Lemma \ref{lem pal lem} we have $[\mu,\mu]_p=[\mu,\mu]$, $[\mu,\mu_p]_p=[\mu,\mu_p]$ and $[\mu_p,\mu_p]_p=e_3(3)[\mu_p,\mu_p]$.
\end{remark}

After applying the differentials $d$ on the double complex 1, we obtain for $j,k\geq 1$ the first spectral sequence
\begin{equation}\label{eq: sp sequence 1}
E_1^{j,k}=H^{j+k-1}_{(j)}(A)\Rightarrow H^{j+k-1}(C^{\kbb}_p(A)[1]),
\end{equation}
where $d_1=-[\mu_p,\cdot]: E_1^{j,k}\to E_1^{j+1,k}$.

To show that the functor $\PDef_A$ is controlled by the dgla $C^{\kbb}_p(A)[1]$ we first need few Lemmata. For a $k$-algebra $A$ we define the $k$-algebra $A_0$ that is as a $k$-vector space isomorphic to $A$ and it has zero multiplication. 
\begin{lemma}\label{lem pois mc2}
Poisson algebra structures on $A_0$
are in bijection with Maurer-Cartan elements of $C^{\kbb}_p(A_0)[1]$, i.e with elements $(\mu,\mu_p)\in C_{(1)}^2(A_0)\oplus C_{(2)}^2(A_0)$ satisfying $\frac{1}{2}[\mu,\mu]=[\mu,\mu_p]=\frac{1}{2}[\mu_p,\mu_p]_p=0$.
\end{lemma}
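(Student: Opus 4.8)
The plan is to unwind the definitions on both sides and match the three components of the Maurer--Cartan equation with the three axioms of a Poisson algebra.

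I would first note that, since $A_0$ carries the zero multiplication (and, as a Poisson algebra, the zero bracket), for $A_0$ we have $\mu=\mu_p=0$, so the differential $\tilde d=[\mu+\mu_p,\cdot]_p$ of $C^{\kbb}_p(A_0)[1]$ vanishes identically. Consequently a Maurer--Cartan element of $C^{\kbb}_p(A_0)[1]$ is precisely an element $x$ of its degree $1$ part --- which, as recorded in the statement, is $C^2_{(1)}(A_0)\oplus C^2_{(2)}(A_0)$ --- satisfying $\frac{1}{2}[x,x]_p=0$. Writing $x=(\mu,\mu_p)$ with $\mu\in C^2_{(1)}(A_0)$ and $\mu_p\in C^2_{(2)}(A_0)$, the next step is to expand $[x,x]_p$. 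By the definition of the Hodge-graded bracket $[\,,\,]_p$ with $m=n=2$, the cochain $[(\mu,\mu_p),(\mu,\mu_p)]_p\in C^3_{(1)}(A_0)\oplus C^3_{(2)}(A_0)\oplus C^3_{(3)}(A_0)$ has the three homogeneous components $[\mu,\mu]$, $[\mu,\mu_p]+[\mu_p,\mu]=2[\mu,\mu_p]$, and $[\mu_p,\mu_p]_p$; using the graded symmetry $[\mu,\mu_p]=[\mu_p,\mu]$ in these degrees together with the Remark after the construction of $C^{\kbb}_p(A)[1]$ (namely $[\mu,\mu]=[\mu,\mu]_p\in C^3_{(1)}$, $[\mu,\mu_p]=[\mu,\mu_p]_p\in C^3_{(2)}$, and $[\mu_p,\mu_p]_p=e_3(3)[\mu_p,\mu_p]$), the equation $\frac{1}{2}[x,x]_p=0$ is therefore equivalent to the three separate equations $\frac{1}{2}[\mu,\mu]=[\mu,\mu_p]=\frac{1}{2}[\mu_p,\mu_p]_p=0$.

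It then remains to exhibit the bijection and to translate each of the three equations. To a Poisson structure on the vector space underlying $A_0$, with product $a\cdot b$ and bracket $\{a,b\}$, I associate the pair $(\mu,\mu_p)$ given by $\mu(a\otimes b)=a\cdot b$ and $\mu_p(a\otimes b)=\{a,b\}$. Commutativity of $\cdot$ is precisely the condition $\mu\in C^2_{(1)}(A_0)$, and antisymmetry of $\{\,,\,\}$ is precisely $\mu_p\in C^2_{(2)}(A_0)$; forgetting the remaining axioms, this is a bijection between (symmetric product, antisymmetric bracket) pairs and $C^2_{(1)}(A_0)\times C^2_{(2)}(A_0)$. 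Under it: $\frac{1}{2}[\mu,\mu]=0$ is the associativity of $\cdot$ (the classical Gerstenhaber identity); $\frac{1}{2}[\mu_p,\mu_p]_p=\frac{1}{2}e_3(3)[\mu_p,\mu_p]=0$ is the Jacobi identity for $\{\,,\,\}$ by Lemma \ref{lem pal lem}; and a direct expansion of the circle product gives $[\mu,\mu_p](a\otimes b\otimes c)=\{a,b\}c-a\{b,c\}+\{ab,c\}-\{a,bc\}$, which vanishes for all $a,b,c$ if and only if $\{\,,\,\}$ acts by derivations of $\cdot$, i.e. the Leibniz rule holds. Hence the three Maurer--Cartan conditions hold simultaneously if and only if $(\cdot,\{\,,\,\})$ is a Poisson algebra structure, and the correspondence above is the asserted bijection. (If one requires Poisson algebras to be unital, one adds, as in the associative case, that unitality is a separate closed condition which may be normalised away and does not affect the bijection at the cochain level.)

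I expect the only genuinely delicate point to be the bookkeeping with the Hodge-graded bracket $[\,,\,]_p$ in the second paragraph: one must verify that on the degree $1$ part it returns exactly the three ``diagonal'' brackets with no surviving cross terms, which is where the Remark and Lemma \ref{lem pal lem} are used. The remaining ingredients --- identifying associative products with solutions of $[\mu,\mu]=0$, and the expansion of $[\mu,\mu_p]$ into the Leibniz rule --- are routine computations with the Gerstenhaber circle product.
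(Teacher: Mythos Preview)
Your proposal is correct and follows essentially the same route as the paper: decompose the Maurer--Cartan equation on $C^{\kbb}_p(A_0)[1]$ into its three Hodge components and identify them with associativity, Leibniz, and Jacobi, invoking Lemma~\ref{lem pal lem} for the last. The only place to be careful is the equivalence $[\mu,\mu_p]=0\Leftrightarrow$ Leibniz, which is not a single-instance read-off from your formula $\{a,b\}c-a\{b,c\}+\{ab,c\}-\{a,bc\}=0$; the paper obtains the converse by the symmetrization $[\mu_p,\mu](a,b,c)+[\mu_p,\mu](a,c,b)-[\mu_p,\mu](b,a,c)=-2\big(\{a,bc\}-\{a,b\}c-\{a,c\}b\big)$, and you should include this short step rather than call it ``direct''.
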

\begin{proof}
Let $(\mu,\mu_p)$ be a Maurer-Cartan element of $C^{\kbb}_p(A_0)[1]$. 
We define the multiplication on $A_0$ by $a\cdot b:=\mu(a,b):=\mu(a\otimes b)$ and the Poisson structure by $\{a,b\}:=\mu_p(a,b):=\mu_p(a\otimes b)$. The product $\cdot$ is commutative and associative if and only if  $\mu\in C_{(1)}^2(A_0)$ and $\frac{1}{2}[\mu,\mu]=0$. Now we show that 
$\mu_p$ defines a Poisson structure. Since $\mu_p\in C_{(2)}^2(A_0)$, everything except the Jacobi identity is clear. The Jacobi identity we get from $\frac{1}{2}[\mu_p,\mu_p]_p=0$ as in Lemma \ref{lem pal lem} (note that we have $[\mu_p,\mu_p]_p=e_3[\mu_p,\mu_p]$). 
We now show the following claim: 
$$\{a,b\cdot c\}=\{a,b\}c+\{a,c\}b~(\text{i.e.\ }\mu_p(a,\mu(b,c))=\mu(\mu_p(a,b),c)+\mu(\mu_p(a,c),b))$$ holds if and only if $[\mu,\mu_p]=0$. Assume that
$$F(a,b,c):=\mu_p(a,\mu(b,c))-\mu(\mu_p(a,b),c)-\mu(\mu_p(a,c),b)=0$$ 
holds.
We have
$$
\begin{array}{l}
F(a,b,c)+F(c,a,b)=\\
\big(\mu_p(a,\mu(b,c))-\mu(\mu_p(a,b),c)-\mu(\mu_p(a,c),b)\big)+\big(\mu_p(c,\mu(a,b))-\mu(\mu_p(c,a),b)-\mu(\mu_p(c,b),a)\big)=\\
-[\mu_p,\mu].
\end{array}
$$
and thus we see one direction. For the other direction we compute
$$
\begin{array}{l}
[\mu_p,\mu](a,b,c)+[\mu_p,\mu](a,c,b)-[\mu_p,\mu](b,a,c)=\\
\big( \mu_p(ab,c)-\mu_p(a,bc)+\mu_p(a,b)c- \mu_p(b,c)a \big)+\big( \mu_p(ac,b)-\mu_p(a,cb)+\mu_p(a,c)b- \mu_p(c,b)a \big)-\\
\big( \mu_p(ba,c)-\mu_p(b,ac)+\mu_p(b,a)c- \mu_p(a,c)b \big)=\\
2\big( -\mu_p(a,bc)+\mu_p(a,b)c+\mu_p(a,c)b  \big)=-2F(a,b,c).
\end{array}
$$
To shorten the notation we wrote $ab=\mu(a,b)$ and similarly for other elements. 
Thus the claim is proved. From this we easily conclude the proof.
\end{proof}

\begin{definition}
The \emph{Poisson product} on a vector space $V$ is a pair $(\cdot,\{~,~\})$, such that $(V,\cdot,\{~,~\})$ is a Poisson algebra.
\end{definition}

\begin{lemma}\label{lem bij mc pois}
Let $A$ be a Poisson algebra and let $B$ be an Artin ring. Maurer-Cartan elements of $C_p^{\kbb}(A\otimes m_B)[1]$ are in bijection with Poisson products on the vector space $A\otimes_k B$, giving the known Poisson product on $A\cong A\otimes_k B/m_B$.
\end{lemma}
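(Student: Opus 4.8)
The plan is to reduce Lemma \ref{lem bij mc pois} to Lemma \ref{lem pois mc2} by an algebra isomorphism that trades the given multiplication on $A\otimes_k B$ for the zero multiplication, in the spirit of the classical reduction in the associative case. First I would unwind what a Maurer--Cartan element of $C_p^{\kbb}(A\otimes m_B)[1]$ is: by construction of the dgla $C^{\kbb}_p(-)[1]$ it is a pair $(\nu,\nu_p)$ with $\nu\in C^2_{(1)}(A)\otimes m_B$ and $\nu_p\in C^2_{(2)}(A)\otimes m_B$ satisfying $\tilde d(\nu,\nu_p)+\tfrac12[(\nu,\nu_p),(\nu,\nu_p)]_p=0$, where $\tilde d=[\mu+\mu_p,\cdot]_p$. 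Expanding with the Hodge grading of the bracket, this is equivalent to the three equations
\begin{equation*}
[\mu,\nu]+\tfrac12[\nu,\nu]=0,\qquad [\mu,\nu_p]+[\mu_p,\nu]+[\nu,\nu_p]=0,\qquad [\mu_p,\nu_p]_p+\tfrac12[\nu_p,\nu_p]_p=0,
\end{equation*}
which, once rewritten in terms of $\bar\mu:=\mu+\nu$ and $\bar\mu_p:=\mu_p+\nu_p$, become precisely $\tfrac12[\bar\mu,\bar\mu]=[\bar\mu,\bar\mu_p]=\tfrac12[\bar\mu_p,\bar\mu_p]_p=0$. So a Maurer--Cartan element of $C_p^{\kbb}(A\otimes m_B)[1]$ is the same datum as a $B$-bilinear commutative associative multiplication $\bar\mu$ on $A\otimes_k B$ together with a $B$-bilinear Poisson bracket $\bar\mu_p$ for it, which (by the $B$-bilinearity, automatic since $\mu,\mu_p$ are extended $B$-linearly and $\nu,\nu_p$ take values in $m_B$) reduces to the free $B$-module $A\otimes_k B$; reducing mod $m_B$ kills $\nu,\nu_p$ and returns $(\mu,\mu_p)$, i.e.\ the given Poisson product on $A$.

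The one subtlety is that the Maurer--Cartan condition as literally written only guarantees that $\bar\mu$ and $\bar\mu_p$ are compatible as in Lemma \ref{lem pois mc2}, i.e.\ give a Poisson product on the vector space $A\otimes_k B$ in the sense of Definition preceding the lemma, but not yet that this structure is $B$-bilinear. I would handle this by noting that $\mu$ and $\mu_p$ are, by definition, the $B$-linear extensions of the structure maps on $A$ (they are elements of $C^\kbb(A)\otimes B$ viewed inside the Hochschild complex of $A\otimes_k B$ over $B$), and $\nu,\nu_p$ have values in $A\otimes m_B$ and are likewise $B$-linear in each argument by the very definition of $C^n(A)\otimes m_B\subset C^n_B(A\otimes B)$; hence $\bar\mu,\bar\mu_p$ are automatically $B$-bilinear, so the commutativity, associativity, Leibniz and Jacobi identities verified vector-space-wise are exactly the axioms of a Poisson $B$-algebra structure on $A\otimes_k B$, i.e.\ a Poisson product in the stated sense. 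Conversely any Poisson product on $A\otimes_k B$ inducing the given one on $A\otimes_k B/m_B$ can be written as $(\mu+\nu,\mu_p+\nu_p)$ with $\nu,\nu_p$ valued in $m_B$ (since the two products agree mod $m_B$), and running the same computation backwards shows $(\nu,\nu_p)$ is a Maurer--Cartan element; this gives the inverse bijection.

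The main obstacle — really the only place where something must be checked rather than bookkept — is the algebraic identity that the Maurer--Cartan equation for $(\nu,\nu_p)$ in the dgla $C^{\kbb}_p(A\otimes m_B)[1]$, expanded via $\tilde d=[\mu+\mu_p,\cdot]_p$ and the Hodge-graded bracket $[\cdot,\cdot]_p$, is term-by-term equivalent to the system $\tfrac12[\bar\mu,\bar\mu]_p=[\bar\mu,\bar\mu_p]_p=\tfrac12[\bar\mu_p,\bar\mu_p]_p=0$. This is essentially the computation already performed in Lemma \ref{lem pois mc2} (the case $A_0$, $B=k\oplus m_B$ with $m_B$ square-zero is not assumed, so one genuinely needs the full quadratic expansion), combined with the Remark's identities $[\mu,\mu]_p=[\mu,\mu]$, $[\mu,\mu_p]_p=[\mu,\mu_p]$, $[\mu_p,\mu_p]_p=e_3(3)[\mu_p,\mu_p]$ and their analogues with $\mu,\mu_p$ replaced by $\bar\mu,\bar\mu_p$, which hold because $\bar\mu\in C^2_{(1)}$ and $\bar\mu_p\in C^2_{(2)}$ are in the same Hodge pieces as $\mu,\mu_p$. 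Granting that, together with Lemma \ref{lem pois mc2}'s translation of each equation into an algebra axiom, the bijection follows; I expect to spell this out by invoking Lemma \ref{lem pois mc2} directly, applied to the $B$-algebra $A\otimes_k B$ rather than $A_0$, since nothing in that lemma's proof used that the multiplication was zero — only that $\bar\mu\in C^2_{(1)}$, $\bar\mu_p\in C^2_{(2)}$, and that the three Maurer--Cartan equations hold.
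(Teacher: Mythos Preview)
Your proposal is correct and follows essentially the same approach as the paper: the paper also writes the deformed structure as $(\mu,\mu_p)+(\xi,\xi_p)$, observes that the Poisson-algebra axioms for this sum are (via Lemma~\ref{lem pois mc2}) equivalent to $[(\mu,\mu_p)+(\xi,\xi_p),(\mu,\mu_p)+(\xi,\xi_p)]_p=0$, and then uses $[(\mu,\mu_p),(\mu,\mu_p)]_p=0$ together with $\tilde d=[(\mu,\mu_p),\cdot]_p$ to identify this with the Maurer--Cartan equation for $(\xi,\xi_p)$. Your component-wise expansion into the three equations and the discussion of $B$-bilinearity are more detailed than the paper's argument but not different in substance; note that the lemma as stated only asks for Poisson products on the \emph{vector space} $A\otimes_k B$, so the $B$-bilinearity paragraph, while correct, is not strictly needed here.
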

\begin{proof}
 Let a Maurer-Cartan element $(\mu,\mu_p)$ of $C^{\kbb}_p(A_0)[1]$ represents the Poisson bracket  of $A$.
The Poisson products on the vector space $A\otimes_k B$, giving the known product on $A\cong A\otimes_k B/m_B$ are obtained by $(\xi,\xi_p)\in C^2_{(1)}(A\otimes m_B)\oplus C^2_{(2)}(A\otimes m_B)$ satisfying 
\begin{equation}\label{eq mc mc pops}
[(\mu,\mu_p)+(\xi,\xi_p),(\mu,\mu_p)+(\xi,\xi_p)]_p=0.
\end{equation}
 Since $[(\mu,\mu_p),(\mu,\mu_p)]_p=0$ and the differential on $C_p^{\kbb}(A\otimes m_B)[1]$ is given by $[(\mu,\mu_p),\cdot]$, then we see that the equation \eqref{eq mc mc pops} gives us MC elements $(\xi,\xi_p)$ of $C_p^{\kbb}(A\otimes m_B)[1]$.
\end{proof}

\begin{proposition}\label{prop pois dgla}
For a Poisson algebra $A$ the functor $\PDef_A$ is controlled by the dgla $C^{\kbb}_p(A)[1]$.
\end{proposition}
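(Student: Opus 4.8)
The plan is to establish the isomorphism of functors $\PDef_A \cong \Def_{C^{\kbb}_p(A)[1]}$ by exhibiting, for every $B \in \cA$, a natural bijection between Poisson deformations of $A$ over $B$ and Maurer--Cartan elements of $C^{\kbb}_p(A\otimes m_B)[1]$, and then checking that equivalence of deformations corresponds exactly to the gauge action of $\G_{C^{\kbb}_p(A)[1]}(B)$. The first half is essentially already done: Lemma \ref{lem bij mc pois} identifies $\MC_{C^{\kbb}_p(A)[1]}(B)$ with Poisson products on the $B$-module $A\otimes_k B$ restricting to the given Poisson product on $A$. What remains on this side is the routine observation that giving such a Poisson product on the free $B$-module $A\otimes_k B$ is the same datum as giving a flat Poisson $B$-algebra $A'$ together with an identification $A'\otimes_B k \cong A$ of Poisson $k$-algebras: flatness over the Artinian $B$ forces $A'$ to be free as a $B$-module, and a choice of such a free presentation transports the multiplication and bracket of $A'$ to a Poisson product on $A\otimes_k B$ of the required type. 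I would phrase this as: a Poisson deformation $(A',\pi)$ determines, after choosing a $B$-linear splitting of $A' \twoheadrightarrow A' \otimes_B k \xrightarrow{\pi} A$, a pair $(\xi,\xi_p)$ as in Lemma \ref{lem bij mc pois}, and different splittings differ precisely by the action below.

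Next I would match the equivalence relations. On the deformation side, two pairs $(A',\pi_1)$, $(A'',\pi_2)$ are equivalent via a Poisson $B$-algebra isomorphism $\phi$ compatible with the augmentations; under the presentation above, $\phi$ becomes a $B$-linear automorphism of $A\otimes_k B$ that is the identity modulo $m_B$, i.e.\ an element of $\exp\big((A_0 \otimes m_B)\big)$ acting on $A\otimes_k B$ — and $(A_0 \otimes m_B) = C^1_{(1)}(A)\otimes m_B$ is exactly $\fg^0 \otimes m_B$ for $\fg = C^{\kbb}_p(A)[1]$, since the degree-$0$ part of the total complex $D^{\kbb}[1]$ is $C^1_{(1)}(A)$. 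So the group acting on both sides is literally the same group $\G_{\fg}(B)$. The content is then to verify that the induced action of such a $\phi$ on the Maurer--Cartan pair $(\xi,\xi_p)$ agrees with the gauge action formula $x \mapsto x + \sum_{n\ge 0}\frac{[b,\cdot]^n}{(n+1)!}([b,x]-d(b))$. This is the standard computation (it is exactly how one sees that commutative deformations are controlled by the Harrison dgla, and non-commutative ones by the Hochschild dgla); the point is that changing the presentation of $A'$ by $e^{b}$ with $b \in C^1_{(1)}(A)\otimes m_B$ conjugates the multiplication $\mu + \xi$ and the bracket $\mu_p + \xi_p$, and conjugation by $e^b$ at the level of the Gerstenhaber/$[\,,\,]_p$ bracket is precisely the exponentiated adjoint action, which unwinds to the displayed gauge formula after one accounts for the differential term $-d(b) = -[\mu+\mu_p, b]_p$.

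The main obstacle — and the only place requiring genuine care rather than bookkeeping — is the compatibility of the bracket $[\,,\,]_p$ with this conjugation action, because $[\,,\,]_p$ is \emph{not} the Gerstenhaber bracket (as the Remark after the definition of $C^{\kbb}_p(A)[1]$ emphasizes): it is the Gerstenhaber bracket followed by projection onto the appropriate Hodge components. Concretely, one must check that when $b \in C^1_{(1)}(A)\otimes m_B$ and $x = (\xi,\xi_p)$, the Gerstenhaber brackets $[b,\xi]$, $[b,\xi_p]$, $[b,\mu]$, $[b,\mu_p]$ automatically land in the Hodge pieces dictated by the definition of $[\,,\,]_p$ — so that on the relevant subcomplex $[\,,\,]_p$ and the Gerstenhaber bracket coincide and no information is lost under projection. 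This follows from Lemma \ref{lem pal lem} together with the fact that bracketing with a degree-$1$, weight-$1$ element shifts the Hodge weight in the expected way (an element of $C^1_{(1)}$ has weight $1$, and the Gerstenhaber bracket adds weights minus one); I would isolate this as the key sub-claim, prove it by the same kind of direct $S_n$-representation argument used for Lemma \ref{lem pal lem}, and then conclude that the conjugation action of $\G_{\fg}(B)$ on the solutions of \eqref{eq mc mc pops} is exactly the gauge action, whence $\PDef_A(B) = \MC_{\fg}(B)/\G_{\fg}(B) = \Def_{\fg}(B)$ naturally in $B$.
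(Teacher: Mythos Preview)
Your proposal is correct and follows essentially the same route as the paper: invoke Lemma \ref{lem bij mc pois} for the bijection between $\MC_{\fg}(B)$ and Poisson products on $A\otimes_k B$ restricting to the given one on $A$, then show that equivalence of deformations via a $B$-linear automorphism $e^{\alpha}$ (with $\alpha\in C^1(A)\otimes m_B$) matches the gauge action by unwinding conjugation as $\exp([\alpha,\cdot])$ and using $\tilde d(\alpha)=-[\alpha,(\mu,\mu_p)]_p$. The point you single out as the ``main obstacle'' --- that $[\alpha,\cdot]_p=[\alpha,\cdot]$ for $\alpha\in C^1_{(1)}$ so no Hodge projection is lost --- is exactly what the paper uses, though it records it as a one-line observation rather than isolating it as a sub-claim.
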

\begin{proof}
We write for short $\fg:=C^{\kbb}_p(A)[1]$.
By Lemma \ref{lem bij mc pois} there exists a bijection between $\MC_{\fg}(B)$ and Poisson products on the vector space $A\otimes_k B$, giving the known Poisson product on $A\cong A\otimes_k B/m_B$.

To conclude the proof we show that two Poisson products $(\cdot,\{~,~\})$ and $(\cdot',\{~,~\}')$ on $A\otimes_k B$ are equivalent (in the sense of Definition \ref{def iso fun pois}) if and only if the corresponding elements $(\gamma,\gamma_p), (\gamma',\gamma_p')\in \MC_{\fg}(B)$ are gauge equivalent. The products are equivalent if and only if there exists $\alpha\in C^1(A)\otimes m_B$ such that 
\begin{equation}\label{eq com mult}
a\cdot'b=\exp(\alpha)(\exp(-\alpha)(a)\cdot \exp(-\alpha)(b)),
\end{equation}
\begin{equation}\label{eq pois mult}
\{a,b\}'=\exp(\alpha)(\{\exp(-\alpha)(a), \exp(-\alpha)(b)\}).
\end{equation}
As above let a Maurer-Cartan element $(\mu,\mu_p)$ of $C^{\kbb}_p(A_0)[1]$ represents the Poisson bracket of $A$.

From \eqref{eq com mult} we obtain
\begin{equation}\label{eq poisson mcg1}
(\mu+\gamma')(a,b)=\exp(\alpha)(\exp(-\alpha)(a)\cdot \exp(-\alpha)(b))=\exp([\alpha,\cdot])(\mu+\gamma)(a,b),
\end{equation}
where the later equality we get after some elementary computation.
In the same way from \eqref{eq pois mult} we obtain
\begin{equation}\label{eq poisson mcg2}
(\mu_p+\gamma_p')(a,b)=\exp(\alpha)\{\exp(-\alpha)(a), \exp(-\alpha)(b)\}=\exp([\alpha,\cdot])(\mu_p+\gamma_p)(a,b).
\end{equation}

Elements $(\gamma,\gamma_p)\in \MC_{\fg}(B)$ and $(\gamma',\gamma_p')\in \MC_{\fg}(B)$ are gauge equivalent if
\begin{equation}\label{eq poisson mcg}
(\gamma',\gamma_p')=(\gamma,\gamma_p)+\sum_{n=0}^{\infty}\frac{[\alpha,\cdot]_p^n}{(n+1)!}([\alpha,(\gamma,\gamma_p)]_p-\tilde{d}(\alpha))
\end{equation}
holds.

Since $\tilde{d}(\alpha)=[(\mu,\mu_p),\alpha]_p=-[\alpha,(\mu,\mu_p)]_p$ and $[\alpha,\cdot]=[\alpha,\cdot]_p$, we see that \eqref{eq poisson mcg} holds if and only if the equations \eqref{eq poisson mcg1} and \eqref{eq poisson mcg2} hold.
\end{proof}

\section{Computation of the Hochschild and Poisson cohomology groups for Gorenstein toric varieties}

\subsection{Affine Gorenstein toric varieties}
Let $M,N$ be mutually dual, finitely generated, free Abelian groups. We denote by $M_{\RR}$, $N_{\RR}$ the associated real vector spaces obtained via base change with $\RR$. Let $\sigma=\lan a_1,...,a_N \ran\subset N_{\RR}$ be a rational, polyhedral cone with apex in $0$ and let $a_1,...,a_N\in N$ denote its primitive fundamental generators (i.e. none of the $a_i$ is a proper multiple of an element of $N$). We define the dual cone $\sigma^{\vee}:=\{r\in M_{\RR}~|~\lan \sigma,r \ran\geq 0\}\subset M_{\RR}$ and denote by $\Lambda:=\sigma^{\vee}\cap M$ the resulting semi-group of lattice points. Its spectrum $\text{Spec}(k[\Lambda])$ is called an \emph{affine toric variety}.

\emph{Affine toric Gorenstein varieties}
are obtained by putting a lattice polytope $P\subset \AA\cong \RR^{n-1}$ into the affine hyperplane $\AA\times \{1\}\subset \AA\times \RR=:N_{\RR}$ and defining $\sigma:=\text{Cone}(P)$, the cone over $P$. Then the canonical degree $R^*\in M$ equals $(\underline{0},1)$.

It is a trivial check that Hochschild differentials respect the grading given by the degrees $R\in M$. Thus we get the Hochschild subcomplex $C^{\kbb,R}_{(i)}$ and we denote the corresponding cohomology groups by $H^{n,R}_{(i)}(A)\cong T^{n-i,R}_{(i)}(A)$, where the later is the degree $R$ part of the (higher) Andr\'e-Quillen cohomology group $T^{n-i,R}_{(i)}(A)$ (see \cite[Section 4]{fil}). We will not use general Andr\'e-Quillen cohomology theory, we will only use the well-known isomorphism $T^{n-i}_{(i)}(A)\cong H^n_{(i)}(A)$ for $n\geq i$ (see e.g. \cite{lod}).


\subsection{Poisson cohomology groups of Poisson Gorenstein toric surfaces}\label{subsec pois 42}

Let $X_{\sigma_n}=\spec(A_n)$ be the Gorenstein toric surface given by $g(x,y,z)=xy-z^{n+1}$. $\Lambda_n:=\sigma_n^{\vee}\cap M$ is generated by $S_1:=(0,1)$, $S_2:=(1,1)$ and $S_3:=(n+1,n)$, with the relation $S_1+S_3=(n+1)S_2$. 

In order to compute the Poisson cohomology groups we need to analyse the spectral sequence \eqref{eq: sp sequence 1}. First we need to understand all the parts of the Hochschild cohomology. 

\begin{proposition}\label{prop eq pr}
It holds that
\begin{equation}\label{eq dimt1}
\dim_kT^{1,-R}_{(1)}(A_n)=\dim_kT^{1,-R}_{(2)}(A_n)=\left\{
\begin{array}{ll}
1& \text{ if }R=kS_2 \text{ for }2\leq k\leq n+1\\ 
0 & \text{ otherwise. }
\end{array}
\right.
\end{equation}
Moreover,
 $T^2_{(1)}(A_n)\cong H^3_{(1)}(A_n)=0$.  
For $i\geq 3$ we have $T^k_{(i)}(A_n)=0$ if $k\ne i-1,i$ and $$T^{i-1}_{(i)}(A_n)\cong T^{i}_{(i)}(A_n)\cong A_n/(\frac{\partial g}{\partial x_1},\frac{\partial g}{\partial x_2},\frac{\partial g}{\partial x_3}).$$
The later has $k$-dimension equal to $n$. 
\end{proposition}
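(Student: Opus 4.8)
The plan is to reduce every claimed statement to known facts about the André–Quillen / cotangent cohomology of the hypersurface $A_n = k[x_1,x_2,x_3]/(g)$ with $g = x_1x_2 - x_3^{n+1}$, together with the toric-combinatorial computations of \cite{fil}. First I would recall that for a hypersurface $A = P/(g)$ with $P = k[x_1,x_2,x_3]$ the cotangent complex is concentrated in two degrees, so that $T^k_{(1)}(A)$ vanishes for $k \geq 3$ and $T^1_{(1)}(A), T^2_{(1)}(A)$ fit into the standard exact sequences coming from the conormal sequence; in particular $T^2_{(1)}(A) \cong \coker(\Omega_{P|k}\otimes A \to \h_A((g)/(g)^2, A))$, which by the isomorphism $T^{n-i}_{(i)}(A)\cong H^n_{(i)}(A)$ (quoted in the excerpt) identifies with $H^3_{(1)}(A_n)$. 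Its vanishing is exactly the statement that $X_{\sigma_n}$ is a complete intersection with isolated singularity, equivalently $\{\partial g/\partial x_j\}$ is a regular sequence up to the Jacobian ideal being of finite colength — I would cite this as standard (it also appears implicitly in the commented-out \eqref{eq h32}).

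Next, for the graded pieces in degrees $-R$, I would invoke directly the convex-geometric description of $T^{1,-R}_{(1)}(A_n)$ and $T^{1,-R}_{(2)}(A_n)$ from \cite[Section 4]{fil}: both are computed from the same combinatorial data attached to the polytope $P$ (the segment whose endpoints correspond to $S_1$ and $S_3$), and the nonvanishing occurs precisely for the interior lattice degrees $R = kS_2$, $2 \le k \le n+1$, each contributing dimension $1$. This is the toric specialization already recorded for surfaces; I would simply quote it rather than recompute, noting $T^{1}_{(1)}(A_n)\cong H^2_{(1)}(A_n)$ and $T^{1}_{(2)}(A_n)\cong H^3_{(2)}(A_n)$, which matches \eqref{eq h32}. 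The point that $\dim_k T^{1,-R}_{(1)} = \dim_k T^{1,-R}_{(2)}$ in each degree is the surface incarnation of the general phenomenon that these two $T^i$'s coincide for two-dimensional normal singularities.

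For the higher Hodge pieces $i \ge 3$, the key input is the general description of $T^k_{(i)}(A)$ for a \emph{complete intersection} $A$: writing $A = P/(g)$ with $g$ a nonzerodivisor, one has $T^k_{(i)}(A) = 0$ unless $k \in \{i-1, i\}$, and both $T^{i-1}_{(i)}(A)$ and $T^i_{(i)}(A)$ are isomorphic to $A/J$, where $J = (\partial g/\partial x_1, \partial g/\partial x_2, \partial g/\partial x_3)$ is the Jacobian (Tjurina) ideal — this comes from the fact that the higher exterior-power André–Quillen cohomologies of a hypersurface are computed by the Koszul complex on the partials, which has homology $A/J$ in the two relevant spots and nothing else since $\{\partial g/\partial x_j\}$ has finite colength. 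I would state this, cite \cite{lod} (and \cite{fil}) for the precise form, and then finish with the explicit dimension count: $A_n/J = k[x_1,x_2,x_3]/(x_2,\, x_1,\, x_3^{n})\cong k[x_3]/(x_3^n)$, which has $k$-dimension $n$.

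The main obstacle I anticipate is not any single hard estimate but rather bookkeeping: making sure the Hodge grading $(i)$ is matched correctly with the homological degree in $T^{n-i}_{(i)}\cong H^n_{(i)}$, and verifying that the combinatorial formulas imported from \cite{fil} really do specialize, with the stated degree ranges and multiplicities, to the surface $xy = z^{n+1}$. Once the dictionary "$H^n_{(i)} = T^{n-i}_{(i)}$, complete intersection $\Rightarrow$ cotangent cohomology is Koszul on the partials" is set up carefully, each individual assertion reduces to either a citation or the one-line computation $A_n/J \cong k[z]/(z^n)$.
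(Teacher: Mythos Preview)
Your approach is essentially the same as the paper's: the paper's entire proof is the single citation ``\cite[Proposition 3.3, Example 3]{fil}'', and you are sketching what that citation unpacks to (hypersurface cotangent complex, Koszul-on-partials periodicity, and the toric degree count). The overall strategy and all conclusions are correct.

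Two local inaccuracies in your write-up, however, should be fixed before this is usable. First, the displayed identification $T^2_{(1)}(A) \cong \coker\big(\Omega_{P|k}\otimes A \to \h_A((g)/(g)^2, A)\big)$ is garbled: the cokernel of the map $\Der_k(P,A)\to \h_A(I/I^2,A)$ is $T^1$, not $T^2$, and in any case for a hypersurface (or any complete intersection) one has $T^2_{(1)}=0$ simply because the cotangent complex has amplitude $[-1,0]$. Second, the clause ``its vanishing is exactly the statement that $X_{\sigma_n}$ is a complete intersection with isolated singularity'' conflates two different things: $T^2=0$ needs only the complete-intersection hypothesis, while the isolated-singularity condition (finite colength of the Jacobian ideal) is what you need later to conclude that $A_n/J$ is finite-dimensional and that the Koszul homology on the partials is concentrated where you claim. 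Once you separate these two inputs cleanly, the argument goes through as you outline.
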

\begin{proof}
\cite[Proposition 3.3, Example 3]{fil}.
\end{proof}
\begin{corollary}
Since $T^{i-1}_{(i)}(A_n)\cong H^{2i-1}_{(i)}(A_n)$ and $T^{i}_{(i)}(A_n)\cong H^{2i}_{(i)}(A_n)$ we see that $E^{j,k}_2=E^{j,k}_{\infty}$ holds for every $j,k\geq 1$.
\end{corollary}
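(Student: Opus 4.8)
The plan is to show that the first spectral sequence \eqref{eq: sp sequence 1} degenerates at $E_2$ by exhibiting the only nonzero terms on the $E_1$ page and checking that, for bidegree reasons, no higher differential $d_r$ with $r\ge 2$ can be nonzero. Recall that $E_1^{j,k}=H^{j+k-1}_{(j)}(A_n)$ with $d_1=-[\mu_p,\cdot]\colon E_1^{j,k}\to E_1^{j+1,k}$, so the columns are indexed by the Hodge weight $j$ and the total degree along a column is $j+k-1$. The differential $d_r$ on the $r$-th page shifts $(j,k)\mapsto(j+r,k-r+1)$, hence it changes the Hodge weight by $r$ while preserving the total cohomological degree $j+k-1$.

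First I would assemble the input from Proposition \ref{prop eq pr}: using $T^{n-i}_{(i)}(A_n)\cong H^n_{(i)}(A_n)$ one sees that in weight $j=1$ the only nonzero group among the $E_1^{1,k}$ is $H^2_{(1)}(A_n)$ (at $k=2$), since $T^1_{(1)}(A_n)$ is the relevant finite-dimensional space and $T^2_{(1)}(A_n)\cong H^3_{(1)}(A_n)=0$, and $H^1_{(1)}(A_n)$ does not occur because the double complex starts at $C^1_{(1)}$; in weight $j=2$ the only nonzero group is $H^3_{(2)}(A_n)$ (at $k=2$), by the same dictionary $H^{2i-1}_{(i)}\cong T^{i-1}_{(i)}$ applied with $i=2$, together with $H^4_{(2)}(A_n)\cong T^2_{(2)}(A_n)$ which vanishes for this surface; and for each weight $j=i\ge 3$ the only nonzero terms are $E_1^{i,k}$ with $j+k-1\in\{2i-1,2i\}$, i.e.\ $k\in\{i,i+1\}$, each equal to $A_n/(\partial g/\partial x_1,\partial g/\partial x_2,\partial g/\partial x_3)$, of dimension $n$. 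So the $E_1$ page is supported on two "diagonals" of total degrees $2i-1$ and $2i$ as $i$ ranges over $\{1,2,3,\dots\}$ (with the $j=1$ diagonal truncated to just degree $2$).

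Next I would pass to $E_2$. The only possibly nonzero $d_1$ runs between consecutive weights along a fixed $k$, i.e.\ from $E_1^{i,k}$ to $E_1^{i+1,k}$. Between the weight-$1$ and weight-$2$ columns both nonzero groups sit at $k=2$, so there is a single potentially nonzero $d_1\colon H^2_{(1)}(A_n)\to H^3_{(2)}(A_n)$; whatever its rank, after taking cohomology the surviving $E_2$ terms still lie on the same two total-degree diagonals. For $i\ge 3$ the groups in weights $i$ and $i+1$ with the same $k$ sit in total degrees differing by $1$ (one is a "$2i-1$" term, the neighbour is a "$2i+2$" term, etc.), so those $d_1$'s are automatically zero and $E_2^{j,k}=E_1^{j,k}$ there. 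Finally, for $r\ge 2$ the differential $d_r$ preserves total degree $j+k-1$ but increases the Hodge weight by $r\ge 2$; since the nonzero $E_2$ terms of any fixed total degree $d$ live in at most one Hodge weight when $d\ge 5$ (namely $d=2i-1$ forces weight $i$, and $d=2i$ forces weight $i$), and in the low-degree range $d\in\{2,3,4\}$ one checks directly from the list above that there is simply no pair of nonzero terms whose weights differ by $\ge 2$, every $d_r$ with $r\ge 2$ has either zero source or zero target. Hence $E_2^{j,k}=E_\infty^{j,k}$ for all $j,k\ge 1$, which is the assertion.

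The only delicate point is the bookkeeping in the low-degree corner: one must make sure that after taking $d_1$-cohomology the $E_2$ page really has, in each total degree $\le 4$, its nonzero entries concentrated so that no weight-gap of $2$ or more survives — in particular that the potential $d_1\colon H^2_{(1)}(A_n)\to H^3_{(2)}(A_n)$ does not leave behind stray classes in positions that a $d_2$ or $d_3$ could pair up. This is the main (though still routine) obstacle; everything else is a degree count using the explicit vanishing ranges in Proposition \ref{prop eq pr}. I would therefore spell out the $E_2$ page in total degrees $2,3,4$ explicitly and observe that in each of those degrees all surviving classes share a single Hodge weight, so that $d_r=0$ for $r\ge 2$ by the weight-shift observation, completing the argument.
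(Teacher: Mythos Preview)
Your overall strategy---show that the support of the $E_1$-page is so thin that every $d_r$ with $r\ge 2$ has zero source or zero target---is the right one, and it is also what the paper has in mind (the corollary is left unproved there). However, the execution contains two genuine errors that you should fix.

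First, the differential $d_r\colon E_r^{j,k}\to E_r^{j+r,k-r+1}$ does \emph{not} preserve the total degree $j+k-1$; it raises it by $1$. Your whole final paragraph is organised around the false claim that $d_r$ stays in a fixed total degree, so as written the vanishing argument does not go through. The correct bookkeeping is in the bidegree $(j,k)$ itself: the nonzero $E_1$-terms satisfy $k-j\in\{0,1\}$ for $k\ge 2$ (and $j\in\{1,2\}$ for $k=1$), while $d_r$ sends $k-j$ to $(k-j)-2r+1$, which for $r\ge 2$ is $\le -2$, forcing the target out of the support. This is the argument you want.

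Second, your inventory of nonzero $E_1$-terms is wrong in the low corner. You claim $H^1_{(1)}(A_n)$ ``does not occur'' and omit $H^2_{(2)}(A_n)$ entirely; both are nonzero (derivations and bivector fields, sitting at $E_1^{1,1}$ and $E_1^{2,1}$). You also assert that $T^2_{(2)}(A_n)\cong H^4_{(2)}(A_n)$ vanishes, but it does not: for the $A_n$-surface it equals $A_n/(\partial g/\partial x_1,\partial g/\partial x_2,\partial g/\partial x_3)$ and appears as $E_1^{2,3}$ in the paper's display of the rows $E_1^{\,\cdot\,,k}$. Fortunately none of these extra terms supports a nonzero $d_r$ for $r\ge 2$ (the targets land in rows $\le 0$ or in columns with $k-j\le -2$), so once you correct the list and redo the bidegree count as above the degeneration still follows.
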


Elements from $H^2_{(2)}(A_n)$ define Poisson structures on $\spec(A_n)$ by Lemma \ref{lem pal lem}, since $H^3_{(3)}(A)=0$.
Let $\mu_p\in H^2_{(2)}(A_n)$ denote a Poisson structure on $\spec(A_n)$.
 Let $\fg_n:=C^{\kbb}_p(A_n)[1]$. 
From above we have the following description of the  spectral sequence \eqref{eq: sp sequence 1}:
$$E_1^{3,\kbb}: 0\xrightarrow{d_1}H^4_{(2)}(A_n)\xrightarrow{d_1}H^5_{3}(A_n)\xrightarrow{d_1}
0\xrightarrow{d_1} \cdots$$
 $$E_1^{2,\kbb}: H^2_{(1)}(A_n)\xrightarrow{d_1}H^3_{(2)}(A_n)\xrightarrow{d_1}0\xrightarrow{d_1}
0\xrightarrow{d_1} \cdots$$
 $$E_1^{1,\kbb}: H^1_{(1)}(A_n)\xrightarrow{d_1}H^2_{(2)}(A_n)\xrightarrow{d_1}0\xrightarrow{d_1}
0\xrightarrow{d_1} \cdots,$$

 $E_1^{j,\kbb}$ for $j>3$ have only two non-vanishing terms $E_1^{j,j-1}=H^{2j-2}_{(j)}(A_n)$ and  $E_1^{j,j}=H^{2j-1}_{(j)}(A_n)$.

\begin{corollary}\label{cor tor sur}
$$H^0(\fg_n)\cong\ker\big(H^1_{(1)}(A_n)\xrightarrow{d_1}H^2_{(2)}(A_n)\big),$$
$$H^1(\fg_n)\cong\coker\big(H^1_{(1)}(A_n)\xrightarrow{d_1}H^2_{(2)}(A_n)\big)\oplus \ker\big(H^2_{(1)}(A_n)\xrightarrow{d_1}H^3_{(2)}(A_n)\big),$$
for $k\geq 2$ we have 
$$H^k(\fg_n)\cong\left\{
\begin{array}{ll}
\coker\big(H^{k}_{(\frac{k}{2})}(A_n)\xrightarrow{d_1}H^{k+1}_{(\frac{k}{2}+1)}(A_n)\big)& \text{ if }k\text{ is even}\\ 
\ker\big(H^{k+1}_{(\frac{k+1}{2})}(A_n)\xrightarrow{d_1}H^{k+2}_{(\frac{k+1}{2}+1)}(A_n)\big) & \text{ if }k\text{ is odd}.
\end{array}
\right.
$$
\end{corollary}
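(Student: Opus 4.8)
The plan is to read off the cohomology of the total complex $\fg_n = C^\kbb_p(A_n)[1]$ directly from the $E_\infty$-page of the spectral sequence \eqref{eq: sp sequence 1}, using the preceding Corollary which asserts $E_2^{j,k} = E_\infty^{j,k}$ for all $j,k \geq 1$. First I would recall that $H^{j+k-1}(\fg_n)$ has, by convergence of the spectral sequence, an associated graded given by $\bigoplus_{j} E_\infty^{j,k}$ over all pairs $(j,k)$ with $j+k-1$ equal to the fixed total degree; since $k$ is a field and everything is a $k$-vector space, such extensions split and the total cohomology is literally the direct sum of the surviving $E_\infty$-terms along the relevant antidiagonal.

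Next I would tabulate those antidiagonals explicitly using the description of the $E_1$-rows given just above the statement. Because $E_1^{j,\kbb}$ for $j > 3$ has only the two terms $E_1^{j,j-1} = H^{2j-2}_{(j)}(A_n)$ and $E_1^{j,j} = H^{2j-1}_{(j)}(A_n)$, and the $d_1$ differential runs $E_1^{j,k}\to E_1^{j+1,k}$ (increasing $j$, fixing $k$), one checks that for $j \geq 3$ almost all $d_1$'s vanish for degree/support reasons: the only possibly nonzero $d_1$ on the high rows is $E_1^{j,j}=H^{2j-1}_{(j)}(A_n)\to E_1^{j+1,j}=H^{2j-1}_{(j+1)}(A_n)$, which by Proposition \ref{prop eq pr} (where $T^k_{(i)}=0$ unless $k\in\{i-1,i\}$) is zero since $2j-1 \neq (j+1)-1$ and $2j-1\neq j+1$ once $j\geq 3$. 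Hence for total degree $k\geq 4$ only the term on row $E_1^{\lceil \cdot\rceil, \cdot}$ with the appropriate parity survives: when $k$ is even, total degree $k$ picks out $E_2^{k/2+1,\,k/2}=H^{k}_{(k/2)}(A_n)$ modulo the image of $d_1$ from $E_1^{k/2, k/2}$, i.e.\ the cokernel written in the statement; when $k$ is odd, total degree $k$ picks out $E_2^{(k+1)/2,\,(k+1)/2}=\ker\big(H^{k+1}_{(\frac{k+1}{2})}\to H^{k+2}_{(\frac{k+1}{2}+1)}\big)$. For the low degrees $k=0,1$ and $k=2,3$ I would handle the three explicit rows $E_1^{1,\kbb}, E_1^{2,\kbb}, E_1^{3,\kbb}$ by hand: degree $0$ sees only $\ker(H^1_{(1)}\xrightarrow{d_1}H^2_{(2)})$ on row $1$; degree $1$ sees $\coker(H^1_{(1)}\xrightarrow{d_1}H^2_{(2)})$ from row $1$ together with $\ker(H^2_{(1)}\xrightarrow{d_1}H^3_{(2)})$ from row $2$; degree $2$ sees $\coker(H^2_{(1)}\xrightarrow{d_1}H^3_{(2)})$ from row $2$ (the row-$1$ and row-$3$ contributions to degree $2$ vanish since $H^3_{(1)}=0$ by Proposition \ref{prop eq pr} and $H^3_{(3)}=0$), which is exactly the $k=2$ clause; and similarly degree $3$ gets only $\ker(H^4_{(2)}\xrightarrow{d_1}H^5_{(3)})$ from row $3$, matching the odd clause with $k=3$.

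The only genuine subtlety — and the step I would flag as the main obstacle — is justifying that all higher differentials $d_r$ for $r\geq 2$ vanish so that $E_2 = E_\infty$ on the relevant spots; but this is precisely the content of the Corollary immediately preceding the statement, which I am entitled to assume, so after that the argument is bookkeeping. A secondary point worth a sentence is the splitting of the filtration on $H^\kbb(\fg_n)$: since all groups are finite-dimensional $k$-vector spaces, the short exact sequences defining the filtration quotients split, so "associated graded equal to $\bigoplus E_\infty$" upgrades to an honest direct-sum decomposition, which is what the statement asserts. I would close by noting that the formulas in the statement are simply the result of matching antidiagonals $j+k-1 = (\text{fixed degree})$ against the surviving $E_\infty$-terms listed above, with the even/odd dichotomy reflecting whether the surviving term on the relevant row is the "lower" one ($E_1^{j,j-1}$, a cokernel after quotienting by incoming $d_1$) or the "upper" one ($E_1^{j,j}$, a kernel of the outgoing $d_1$), together with the numerology $j = k/2+1$ in the even case and $j=(k+1)/2$ in the odd case.
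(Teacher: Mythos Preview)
Your approach is correct and is exactly the (implicit) argument the paper has in mind: the Corollary carries no separate proof there, being an immediate consequence of the preceding Corollary ($E_2=E_\infty$) together with the displayed description of the $E_1$-rows and the fact that filtrations of $k$-vector spaces split. Your minor indexing slip $E_1^{j+1,j}=H^{2j-1}_{(j+1)}$ (it should read $H^{2j}_{(j+1)}$) is harmless, since that target vanishes either way.
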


\begin{proposition}\label{prop gor tor sur}
It holds that $H^2(\fg_n)\cong A_n/(\frac{\partial g}{\partial x_1},\frac{\partial g}{\partial x_2},\frac{\partial g}{\partial x_3})$.
\end{proposition}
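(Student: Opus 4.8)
The plan is to use the description of $H^2(\fg_n)$ from Corollary \ref{cor tor sur}. Since $2$ is even, we have
\begin{equation*}
H^2(\fg_n)\cong\coker\big(H^2_{(1)}(A_n)\xrightarrow{d_1}H^3_{(2)}(A_n)\big),
\end{equation*}
where $d_1=-[\mu_p,\cdot]$ is the differential of the spectral sequence \eqref{eq: sp sequence 1}. So everything reduces to analysing this single map $d_1\colon H^2_{(1)}(A_n)\to H^3_{(2)}(A_n)$.

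First I would assemble the relevant dimension data from Proposition \ref{prop eq pr}. By \eqref{eq dimt1} and the isomorphisms $H^{2i-1}_{(i)}(A_n)\cong T^{i-1}_{(i)}(A_n)$, $H^{n,R}_{(i)}(A)\cong T^{n-i,R}_{(i)}(A)$, we get that $H^2_{(1)}(A_n)\cong T^1_{(1)}(A_n)$ and $H^3_{(2)}(A_n)\cong T^1_{(2)}(A_n)$ are both graded vector spaces with one-dimensional pieces in degrees $-kS_2$ for $2\le k\le n+1$; in particular each has total dimension $n$. So the cokernel has dimension $n-\rk(d_1)$, and the target of the proposition, $A_n/(\partial g/\partial x_1,\partial g/\partial x_2,\partial g/\partial x_3)$, also has $k$-dimension $n$ by the last sentence of Proposition \ref{prop eq pr}. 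Hence it suffices to show $d_1=0$, and then exhibit (or simply invoke) the identification of $H^3_{(2)}(A_n)$ itself with the stated Jacobian quotient.

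The key step is therefore to prove $d_1=0$ on $H^2_{(1)}(A_n)$. The cleanest route is a degree/weight argument: $\mu_p$ is a Poisson structure of some fixed $M$-degree $R_0$ (for a toric Poisson bracket one takes $\mu_p$ homogeneous), and $d_1=-[\mu_p,\cdot]$ shifts the $M$-grading by $R_0$. Since $H^2_{(1)}(A_n)$ is concentrated in degrees $-kS_2$ and $H^3_{(2)}(A_n)$ in degrees $-\ell S_2$, all with $2\le k,\ell\le n+1$, the bracket can be nonzero only if $R_0$ is a nonzero multiple of $S_2$ compatible with these ranges; one checks that the Gerstenhaber bracket of a class in $T^1_{(1)}$ with $\mu_p$ lands outside the admissible range (or vanishes for parity/weight reasons), forcing $d_1=0$. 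Alternatively, and perhaps more robustly, one notes that already at the previous page $T^2_{(1)}(A_n)\cong H^3_{(1)}(A_n)=0$ and uses the multiplicative/compatibility structure of the spectral sequence together with $d_1\colon H^2_{(1)}\to H^3_{(2)}$ being induced by composition with $\mu_p$, which factors through a group that vanishes. I expect this vanishing to be the main obstacle: it requires either a careful bookkeeping of the convex-geometric degrees from \cite{fil} for the generators of $T^1_{(1)}(A_n)$ and $T^1_{(2)}(A_n)$, or an explicit description of the cocycles representing these classes and a direct computation that $[\mu_p,\cdot]$ kills them.

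Once $d_1=0$ is established, the proposition follows: $H^2(\fg_n)\cong H^3_{(2)}(A_n)$, and by Proposition \ref{prop eq pr} (the case $i=2$, $k=i-1$ of the isomorphism $T^{i-1}_{(i)}(A_n)\cong A_n/(\partial g/\partial x_1,\partial g/\partial x_2,\partial g/\partial x_3)$, equivalently the computation in \cite[Example 3]{fil}) this group is isomorphic to $A_n/(\partial g/\partial x_1,\partial g/\partial x_2,\partial g/\partial x_3)$. I would close by remarking that the same bracket-vanishing also pins down $H^1(\fg_n)$ via the other summand $\ker(H^2_{(1)}\xrightarrow{d_1}H^3_{(2)})=H^2_{(1)}(A_n)$, though that is not needed here.
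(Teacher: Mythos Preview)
Your overall architecture matches the paper's: reduce via Corollary~\ref{cor tor sur} to the cokernel of $d_1\colon H^2_{(1)}(A_n)\to H^3_{(2)}(A_n)$, argue that $d_1=0$, and identify $H^3_{(2)}(A_n)$ with the Jacobian ring. The gap is precisely where you flag it yourself: you do not actually establish $d_1=0$.

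Your degree argument is insufficient. Even granting that $\mu_p$ is homogeneous of degree $R_0$, the constraint you derive only forces $R_0$ to be an integer multiple of $S_2$; it does not rule out $R_0=0$ (torus-invariant bracket) or $R_0=-S_2$ (the structure $\pi_g$ of the example following the proposition), and in those cases the source and target degrees overlap, so nothing prevents $d_1$ from being nonzero on grading grounds alone. Your alternative ``factoring'' argument is also off: by Lemma~\ref{lem pal lem} one has $[p,q]=e_3(2)[p,q]$ for $p\in H^2_{(2)}$ and $q\in H^2_{(1)}$, so $[\mu_p,\cdot]$ lands directly in $H^3_{(2)}$ and does not factor through the vanishing group $H^3_{(1)}(A_n)$.

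The paper closes this gap by invoking an external computation: in \cite{fil2} it is shown that the entire Gerstenhaber product
\[
H^2_{(1)}(A_n)\times H^2_{(2)}(A_n)\longrightarrow H^3_{(2)}(A_n)
\]
is the zero map. Since $d_1=-[\mu_p,\cdot]$ with $\mu_p\in H^2_{(2)}(A_n)$, this gives $d_1=0$ for \emph{every} Poisson structure $\mu_p$, not just homogeneous ones. That result is the substantive input you are missing; the rest of your outline then goes through exactly as in the paper.
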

\begin{proof}
From Corollary \ref{cor tor sur} we know that $H^2(\fg_n)\cong\coker\big(H^{2}_{(1)}(A_n)\xrightarrow{d_1}H^{3}_{(2)}(A_n)\big)$. In \cite{fil2} we proved that the Gerstenhaber product $H^2_{(1)}(A_n)\times H^2_{(2)}(A_n)\to H^3_{(2)}(A_n)$ is the zero map. Since by definition $d_1=-[\mu_p,\cdot]$, we see that $d_1$ is the zero map. Thus $H^2(\fg_n)\cong H^{3}_{(2)}(A_n)\cong A_n/(\frac{\partial g}{\partial x_1},\frac{\partial g}{\partial x_2},\frac{\partial g}{\partial x_3})$.
\end{proof}

\begin{example}
For every hypersurface given by a polynomial $g(x,y,z)$ in $k^3$, we can define a Poisson structure $\pi_g$ on the quotient $k[x,y,z]/g$, namely:
$$\pi_g:=\partial_x(g)\partial_y\wedge \partial_z+\partial_y(g)\partial_z\wedge\partial_x+\partial_z(g)\partial_x\wedge \partial_y,$$
i.e., we contract the differential 1-form $dg$ to $\partial_x\wedge \partial_y\wedge \partial_z$.
In the case of Gorenstein toric surfaces $X_{\sigma_n}=\spec(A_n)$ we have that $$\pi_g=f_0(\lam_1,\lam_2)x^{-S_2+\lam_1+\lam_2},$$
where $f_0$ is skew-symmetric and bi-additive with $f_0(S_1,S_3)=-(n+1)$ (see \cite[Example 4]{fil}). Thus we see that $\pi_g\in H^{2,-S_2}_{(2)}(A_n)$. In this case we see that $H^1(\fg_n)\cong H^2(\fg_n)\cong A_n/(\frac{\partial g}{\partial x_1},\frac{\partial g}{\partial x_2},\frac{\partial g}{\partial x_3})$ by the proof of Proposition \ref{prop gor tor sur} and Corollary \ref{cor tor sur} since $H^1_{(1)}(A_n)\xrightarrow{d_1}H^2_{(2)}(A_n)$ is surjective. This is special case of \cite[Lemma 3.1]{kalgin}.
\end{example}

\subsection{The Hochschild cohomology of three dimensional affine Gorenstein toric varieties}\label{subsection 43}
For an affine Gorenstein toric variety $X_\sigma=\spec(A)$ we will explicitly compute $T^1_{(i)}(A)$ for all $i\geq 1$. They appear in $E_1^{\kbb,2}$ (see \eqref{eq: sp sequence 1} and note that $H^{j+1}_{(j)}(A)\cong T^1_{(j)}(A)$) and they are also important ingredients for understanding $\HH^2(A)$ and $\HH^3(A)$ (see Corollary \ref{last zad cor}). This subsection reproves and generalizes \cite[Theorem 4.1]{alt}.

In \cite{fil} we obtained a convex geometric description of $T^1_{(i)}(A)$ for $i\geq 1$, which we recall now.
Let the cone $\sigma=\lan a_1,...,a_N\ran$ represent an $n$-dimensional toric variety $X_{\sigma}=\spec(A)$, $n\geq 3$. 
For $R\in M$ we define the affine space $$\AA(R):=\{a\in N_{\RR}~|~\langle a,R \rangle=1\}\subset N_{\RR}$$ and consider the polyhedron 
$Q(R):=\sigma\cap \AA(R)\subset \AA(R).$
Vertices of $Q(R)$ are $\bar{a}_j:=a_j/\lan a_j,R  \ran$, for all $j$ satisfying $\langle a_j,R \rangle\geq 1$. We denote $T^1_{(i)}(-R):=T^{1,-R}_{(i)}(A)$.

Let $d_{jk}:=\overline{\bar{a}_j\bar{a}_k}$ denote the compact edges of $Q(R)$ (for $\lan a_j,a_k\ran\leq \sigma$, $\lan a_j,R\ran\geq 1$, $\lan a_k,R\ran\geq 1$). 
We denote the lattice $N\cap\sp_k\lan a_j,a_k\ran$ by $\bar{N}_{jk}$ and its dual with $\bar{M}_{jk}$. Let  $\bar{R}_{jk}$ denote the projection of $R$ to $\bar{M}_{jk}$. 
By $T^1_{\lan a_j,a_{k}\ran}(-\bar{R}_{jk})$ we denote the degree $-\bar{R}_{jk}$ part of the toric surface given by a cone $\lan a_j,a_{k}\ran$.
We define $\sp_kK^R_{jk}$ to be 
$$\sp_kK^R_{jk}:=\sp_k\big(K^R_{a_j}\cap K^R_{a_k}\big),$$
with $K^R_{a_j}=\{r\in \Lambda~|~\lan a_j,r \ran<\lan a_j,R \ran\}$.
Let
$$W_j(R):=\left \{
\begin{array}{lr}
2& \text{ if }\lan a_j,R \ran>1\\ 
1& \text{ if }\lan a_j,R \ran=1\\
0& \text{ if }\lan a_j,R \ran\leq 0.
\end{array}
\right.
$$

\begin{proposition}\label{t1i}
If the compact part of $Q(R)$ lies in a two-dimensional affine space we have
$$\dim_kT_{(i)}^{1}(-R)=\max\big\{0,\sum_{j=1}^NV^i_j(R)-\sum_{d_{jk}\in Q(R)}Q^i_{jk}(R)-{n\choose i}+s^i_{Q(R)}\big\},$$
where

$$V^i_j(R):=\left \{
\begin{array}{ll}
{n\choose i}& \text{ if }\lan a_j,R \ran>1\\ 
{n-1\choose i}& \text{ if }\lan a_j,R \ran=1\\
0& \text{ if }\lan a_j,R \ran\leq 0,
\end{array}
\right.
$$
$$Q^i_{jk}(R):=\left \{
\begin{array}{ll}
{W_j(R)+W_k(R)+n-4-\dim_kT^{1}_{\lan a_j,a_{k}\ran}(-\bar{R}_{jk})\choose i} & \text{ if }\lan a_j,R \ran,\lan a_k,R \ran\ne 0 \\
0&\text{ otherwise,}
\end{array}
\right.
$$
$$s^i_{Q(R)}:=
\left \{
\begin{array}{ll}
\dim_k\wedge^i\big(\bigcap_{d_{jk}\in Q(R)}\sp_kK^R_{jk}\big)& \text{ if }Q(R) \text{ is compact}\\
0&\text{ otherwise.}
\end{array}
\right.
$$
\end{proposition}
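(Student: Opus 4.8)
The plan is to reduce the computation of $T^1_{(i)}(-R)$ to a purely combinatorial Euler-characteristic-type count over the polyhedron $Q(R)$, using the $\check{C}$ech-type complex that computes André-Quillen cohomology of a toric variety, decomposed along the Hodge grading. First I would recall from \cite{fil} the explicit description of the degree $-R$ graded piece of the (higher) cotangent cohomology: there is a complex whose terms are indexed by the faces of the cone $\sigma$ (equivalently, by the vertices and compact faces of $Q(R)$), and whose cohomology in the appropriate spot gives $T^1_{(i)}(-R)$. The Hodge weight $i$ enters through taking $i$-th exterior powers of the local vector-space contributions, which is why binomial coefficients $\binom{n}{i}$, $\binom{n-1}{i}$ and $\binom{\bullet}{i}$ appear in $V^i_j$, $Q^i_{jk}$ and $s^i_{Q(R)}$ respectively.

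The key steps, in order: (1) Stratify $Q(R)$ according to which generators $a_j$ satisfy $\langle a_j,R\rangle>1$, $=1$, or $\le 0$; the first two types contribute vertices $\bar a_j$ of $Q(R)$ with local dimension $n$ (weight $2$) or $n-1$ (weight $1$) respectively, giving the term $V^i_j(R)$. (2) For each compact edge $d_{jk}$ of $Q(R)$, analyze the corank of the edge contribution: the two-dimensional cone $\langle a_j,a_k\rangle$ carries its own toric-surface cotangent cohomology $T^1_{\langle a_j,a_k\rangle}(-\bar R_{jk})$, and the overlap of the vertex data along $d_{jk}$ has dimension $W_j(R)+W_k(R)+n-4-\dim_k T^1_{\langle a_j,a_k\rangle}(-\bar R_{jk})$; taking $i$-th exterior powers gives $Q^i_{jk}(R)$, which is subtracted. (3) The global term $-\binom{n}{i}$ comes from the fact that the "trivial" (inner) derivations of degree $-R$ span a space that, after wedging, has dimension $\binom{n}{i}$ and must be quotiented out. (4) When $Q(R)$ is compact (no non-compact faces of $\sigma$ contribute, i.e. no generator has $\langle a_j,R\rangle=0$ contributing a ray), there is an extra global correction $s^i_{Q(R)}$ coming from the common intersection $\bigcap_{d_{jk}} \sp_k K^R_{jk}$ of the "small" lattice-point spaces, again wedged to weight $i$; this is the obstruction to the complex being exact at the relevant spot. (5) Assemble: the alternating sum of these dimensions computes the Euler characteristic of the relevant truncated complex, and the hypothesis that the compact part of $Q(R)$ lies in a two-dimensional affine space forces all higher cohomology of that complex to vanish, so the Euler characteristic equals $\dim_k T^1_{(i)}(-R)$ — except that this quantity can a priori be negative, whence the $\max\{0,-\}$.

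I expect the main obstacle to be step (4), i.e. correctly identifying and computing the global correction term $s^i_{Q(R)}$ and proving it is precisely what it must be exactly when $Q(R)$ is compact. This requires carefully tracking the kernel/cokernel of the final differential in the face complex and showing that the only surviving class beyond the naive vertex-minus-edge count is captured by $\wedge^i\big(\bigcap_{d_{jk}\in Q(R)}\sp_k K^R_{jk}\big)$; in the non-compact case one must show this intersection contributes nothing, which uses that a non-compact $Q(R)$ has a recession direction killing the relevant cocycle. A secondary difficulty is justifying that the two-dimensionality hypothesis on the compact part of $Q(R)$ genuinely forces the higher terms of the complex to vanish so that the Euler-characteristic computation is sharp; here one invokes the analogous low-dimensional vanishing for cotangent cohomology of toric surfaces together with a Mayer–Vietoris / spectral sequence argument over the edge-vertex incidence structure of the planar polygon $Q(R)$. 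The binomial bookkeeping in steps (1)–(3) is then routine linear algebra once the complex and its Hodge decomposition are in hand, and I would cite \cite{fil} for the construction of that complex rather than rederiving it.
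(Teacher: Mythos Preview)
The paper does not actually prove this proposition: its entire proof reads ``See \cite[Proposition 4.14]{fil}.'' So there is no argument in the present paper to compare your proposal against; the result is imported wholesale from the author's earlier work.

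That said, your outline is a plausible reconstruction of what such a proof should look like, and it matches the shape of the formula well: a vertex--edge inclusion-exclusion over the compact part of $Q(R)$, with local contributions given by exterior powers (hence the binomial coefficients), an edge term controlled by the surface invariant $T^1_{\langle a_j,a_k\rangle}(-\bar R_{jk})$, a global normalization $\binom{n}{i}$, and a compactness correction $s^i_{Q(R)}$. Your identification of the two-dimensionality hypothesis as the mechanism that kills higher cohomology in the face complex---so that an Euler-characteristic count becomes exact---is the right structural insight, and your flagging of step~(4) as the delicate point is accurate: justifying precisely why the global cocycle surviving in the compact case is $\wedge^i\big(\bigcap_{d_{jk}}\sp_k K^R_{jk}\big)$, and why it vanishes otherwise, is where the real content lies. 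If you want to turn this into a self-contained proof rather than a sketch, you will need to write down the actual complex from \cite{fil} and verify the dimension counts term by term; as it stands your proposal is a correct roadmap but not yet a proof, and in any case the paper under review simply defers to the citation.
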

\begin{proof}
See \cite[Proposition 4.14]{fil}.
\end{proof}

From now on we assume that $X_\sigma$ is a three-dimensional affine toric Gorenstein variety given by a cone $\sigma=\lan a_1,...,a_N \ran$, where $a_1,...,a_N$ are arranged in a cycle. Let $d_j:=a_{j+1}-a_j$ ($a_{N+1}:=a_1$)  and let $\ell(j):=\ell(d_j)$ denote its lattice lenght. 
Let $s_1,...,s_N$ be the fundamental generators of the dual cone $\sigma^{\vee}$, labelled so that $\sigma \cap (s_j)^{\perp}$ equals the face spanned by $a_j,a_{j+1}\in \sigma$. Using the previous notation we see that the polytope $P=Q(R^*)$ is a polygon with (oriented) edges equal to $d_j$ for $j=1,...,N$. It holds that $\lan a_j,R^*\ran=1$ for all $j=1,...,N$.

\begin{example}\label{ex prvi}
A typical example of a non-isolated, three dimensional toric Gorenstein singularity is the affine cone $X_{\sigma}$ over the weighted projective space $\PP(1,2,3)$. The cone $\sigma$ is given by 
$\sigma=\lan a_1,a_2,a_3\ran$, where 
$$a_1=(-1,-1,1),~~~ a_2=(2,-1,1),~~~a_3=(-1,1,1).$$
We obtain $\sigma^{\vee}=\lan s_1,s_2,s_3 \ran$ with 
$$s_1=(0,1,1),~~~s_2=(-2,-3,1),~~~s_3=(1,0,1).$$
\end{example}

We need to better understand $s^i_{Q(R)}$ and $\dim_kT^1_{\lan a_j,a_{j+1}\ran}(-\bar{R}_{jk})$ that appears in $Q^i_{j,j+1}(R)$. 

\begin{lemma}\label{lem zadazad}
Let a cone $\sigma=\lan a_j,a_{j+1} \ran\subset \bar{N}_{j,j+1}$ define a toric surface given by the edge $d_j$.
We have 
$$\dim_k\sp_kK^R_{j,j+1}=\max\{0,W_j(R)+W_{j+1}(R)-2-\dim_kT_{\lan a_j,a_j\ran}^{1}(-\bar{R}_{j,j+1})\}.$$
\end{lemma}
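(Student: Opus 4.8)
The plan is to compute $\dim_k \sp_k K^R_{j,j+1}$ directly from the definition $\sp_k K^R_{j,j+1} = \sp_k\big(K^R_{a_j} \cap K^R_{a_{j+1}}\big)$, where $K^R_{a_j} = \{r \in \Lambda \mid \lan a_j, r\ran < \lan a_j, R\ran\}$, by working inside the rank-two lattice $\bar M_{j,j+1}$ dual to $\bar N_{j,j+1} = N \cap \sp_k\lan a_j, a_{j+1}\ran$. First I would reduce to the two-dimensional situation: since $K^R_{a_j}$ and $K^R_{a_{j+1}}$ only see the pairings against $a_j$ and $a_{j+1}$, their intersection with $\Lambda$ is governed entirely by the projection $\bar R_{j,j+1}$ of $R$ and the surface semigroup $\bar\Lambda_{j,j+1} = \lan a_j, a_{j+1}\ran^\vee \cap \bar M_{j,j+1}$. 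So the span in question equals the span of the analogous set for the toric surface defined by the cone $\lan a_j, a_{j+1}\ran$ in degree $\bar R_{j,j+1}$.

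Next I would do the case analysis on $(W_j(R), W_{j+1}(R))$, which records how $R$ pairs with the two rays. The key observation is that $\sp_k K^R_{j,j+1}$ is the span of lattice points $r$ lying strictly below the "ceiling" cut out by $\lan a_j, \cdot\ran = \lan a_j, R\ran$ and $\lan a_{j+1}, \cdot\ran = \lan a_{j+1}, R\ran$ inside the surface. When both pairings are $\leq 0$ (so both $W$'s vanish) the set is empty and the span is $0$; when exactly one pairing is positive the span is at most one-dimensional; when both are positive ($W_j = W_{j+1} = 2$) one generically gets the full rank $2$, but the presence of lattice points on the boundary hyperplanes — exactly what $\dim_k T^1_{\lan a_j, a_j\ran}(-\bar R_{j,j+1})$ measures for the surface, via the formula in Proposition~\ref{t1i} specialized to $n = 2$ — can force the span to drop. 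The natural strategy is to invoke Proposition~\ref{t1i} (or rather the surface computation underlying it, e.g.\ \cite[Section~4]{fil}), where for a toric surface the quantity $s^i_{Q(\bar R)}$ with $i = 1$ is precisely $\dim_k \sp_k(\text{that intersection})$, and to extract the numerical identity $\dim_k \sp_k K^R_{j,j+1} = W_j(R) + W_{j+1}(R) - 2 - \dim_k T^1_{\lan a_j,a_j\ran}(-\bar R_{j,j+1})$ whenever the right-hand side is non-negative, and $0$ otherwise.

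Concretely I expect the proof to run: (i) set up coordinates on $\bar M_{j,j+1} \cong \ZZ^2$ with $a_j, a_{j+1}$ the coordinate functionals (up to the lattice lengths involved), (ii) identify $K^R_{a_j} \cap K^R_{a_{j+1}} \cap \bar\Lambda_{j,j+1}$ with the lattice points of a (possibly degenerate or unbounded) polygon, (iii) observe that its $k$-span has dimension equal to $2$ minus the number of "missing directions," each missing direction corresponding either to a vanishing $W$ or to lattice points being trapped on a boundary line, and (iv) match the count of the latter phenomenon with $\dim_k T^1_{\lan a_j,a_j\ran}(-\bar R_{j,j+1})$ using the known surface formula. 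The bookkeeping is the same as in the proof of Proposition~\ref{t1i} restricted to a single edge, so the cleanest writeup may simply cite \cite[Proposition~4.14]{fil} (or its proof) applied to the two-dimensional cone $\lan a_j, a_{j+1}\ran$ and read off the $s^1$ term.

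The main obstacle will be the boundary lattice-point analysis in the doubly-positive case $W_j = W_{j+1} = 2$: one must carefully argue that every way the span can fail to be full rank is accounted for by $T^1$ of the surface, with no double-counting and no overlooked degeneracy (for instance when $\bar R_{j,j+1}$ is proportional to one of the $s$'s, or when the "polygon" $Q(\bar R_{j,j+1})$ degenerates to a segment or a point). Handling the clamp $\max\{0, -\}$ correctly — i.e.\ checking that the raw count $W_j + W_{j+1} - 2 - \dim_k T^1$ is exactly $\dim_k \sp_k K^R_{j,j+1}$ precisely when it is non-negative, and that negativity coincides with emptiness of the relevant lattice-point set — is the delicate point, but it is a finite case check once the surface dictionary from \cite{fil} is in hand.
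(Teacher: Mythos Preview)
Your plan is sound in spirit, but the paper does not carry out any of this computation here: its entire proof is a one-line citation ``See \cite[Lemma 4.3]{fil}.'' In other words, the identity you are proposing to re-derive by reducing to the rank-two lattice $\bar M_{j,j+1}$ and doing a case analysis on $(W_j,W_{j+1})$ is already established verbatim in \cite{fil}, and the paper simply imports it. Your own remark that ``the cleanest writeup may simply cite \cite[Proposition~4.14]{fil} \ldots\ and read off the $s^1$ term'' is close to this, except that the relevant statement in \cite{fil} is Lemma~4.3 rather than Proposition~4.14; the latter is the $n$-dimensional $T^1_{(i)}$ formula, while the former is precisely the two-dimensional span identity you want.

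So there is no gap in your reasoning, but there is a mismatch of effort: the detailed polygon/boundary-lattice-point analysis you outline is exactly what the proof of \cite[Lemma~4.3]{fil} does, and reproducing it would be redundant. If you want a self-contained argument, your steps (i)--(iv) are the right ones and the ``delicate point'' you flag (matching the clamp $\max\{0,-\}$ with emptiness of the lattice set) is indeed the only nontrivial check; but for the purposes of this paper you should just cite the lemma.
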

\begin{proof}
See \cite[Lemma 4.3]{fil}.
\end{proof}

\begin{lemma}\label{lem t1udz}
$$\dim_kT_{\lan a_j,a_{j+1}\ran}^{1}(-\bar{R}_{j,j+1})=\left\{
\begin{array}{cc}
1&\text{ if }2\leq \lan a_j,R\ran=\lan a_{j+1},R\ran\leq \ell(j)\\
0&\text{otherwise}.
\end{array}
\right.$$
\end{lemma}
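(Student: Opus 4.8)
The plan is to reduce the statement to the classical description of the $T^1$ of a toric \emph{surface} singularity, which is exactly the kind of computation carried out for the surfaces $A_n$ in Proposition~\ref{prop eq pr}. First I would set up notation: since $X_\sigma$ is Gorenstein, $\langle a_j,R^*\rangle=1$ for all $j$, so $\bar R_{j,j+1}$ is a primitive-type degree in $\bar M_{j,j+1}$ relative to the two-dimensional cone $\langle a_j,a_{j+1}\rangle$, and the numbers $\langle a_j,R\rangle$, $\langle a_{j+1},R\rangle$ are the two boundary values that control $W_j(R)$, $W_{j+1}(R)$. The cone $\langle a_j,a_{j+1}\rangle\subset\bar N_{j,j+1}\cong\ZZ^2$ is, up to a lattice automorphism, the cone over the edge $d_j$; with the standard normalization its dual semigroup is generated by three elements $S_1,S_2,S_3$ with a relation $S_1+S_3=\ell(j)\,S_2$, exactly as in Subsection~\ref{subsec pois 42} with $n+1=\ell(j)$. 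So the surface attached to the edge $d_j$ is precisely $A_{\ell(j)-1}$ in the notation there.

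Next I would transport the degree $-\bar R_{j,j+1}$ through this identification and read off the answer from \eqref{eq dimt1} (equivalently Proposition~\ref{prop eq pr}): $\dim_k T^1_{\langle a_j,a_{j+1}\rangle}(-\bar R_{j,j+1})=1$ precisely when $\bar R_{j,j+1}=k\,S_2$ for some $2\le k\le \ell(j)$, and $0$ otherwise. The remaining task is to express the condition ``$\bar R_{j,j+1}=k S_2$ with $2\le k\le \ell(j)$'' intrinsically in terms of the pairings $\langle a_j,R\rangle$ and $\langle a_{j+1},R\rangle$. Here $S_2\in\bar M_{j,j+1}$ is the generator dual to the ``interior'' direction, characterized by $\langle a_j,S_2\rangle=\langle a_{j+1},S_2\rangle=1$ (it is the restriction of $R^*$), while $S_1,S_3$ pair to $0$ with $a_{j+1}$, respectively $a_j$. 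Therefore $\bar R_{j,j+1}=\alpha S_1+\beta S_2+\gamma S_3$ lies on the ray $\ZZ_{\ge0}S_2$ exactly when $\langle a_j,\bar R_{j,j+1}\rangle=\langle a_{j+1},\bar R_{j,j+1}\rangle$, and in that case the common value equals the coefficient $k$. Since $\langle a_j,\bar R_{j,j+1}\rangle=\langle a_j,R\rangle$ and likewise for $a_{j+1}$, the condition becomes $\langle a_j,R\rangle=\langle a_{j+1},R\rangle$, and the bound $2\le k\le\ell(j)$ becomes $2\le\langle a_j,R\rangle=\langle a_{j+1},R\rangle\le\ell(j)$. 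One should also note that when the common value is $\le 1$ or $\le 0$ the relevant $W$'s drop and the formula trivially gives $0$, and when $\bar R_{j,j+1}$ is not a multiple of $S_2$ we also get $0$; both cases are subsumed in ``otherwise''.

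Concretely the write-up would be: (i) choose a lattice isomorphism $\bar N_{j,j+1}\cong\ZZ^2$ taking $\langle a_j,a_{j+1}\rangle$ to the standard cone $\sigma_{\ell(j)-1}$ of the edge $d_j$ (using that $a_j,a_{j+1}$ are primitive and $d_j=a_{j+1}-a_j$ has lattice length $\ell(j)$); (ii) identify the resulting surface with $X_{\sigma_{\ell(j)-1}}=\spec(A_{\ell(j)-1})$ and match $S_1,S_2,S_3$ as above, checking $\langle a_j,S_1\rangle=0$, $\langle a_{j+1},S_3\rangle=0$, $\langle a_j,S_2\rangle=\langle a_{j+1},S_2\rangle=1$; (iii) quote \eqref{eq dimt1}/Proposition~\ref{prop eq pr} for the dimension of $T^1_{\sigma_{\ell(j)-1}}(-kS_2)$; (iv) translate $\bar R_{j,j+1}=kS_2$, $2\le k\le\ell(j)$ into $2\le\langle a_j,R\rangle=\langle a_{j+1},R\rangle\le\ell(j)$ via the pairings. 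The main obstacle I anticipate is purely bookkeeping: pinning down the lattice identification so that the generators of the dual semigroup and the value $k$ line up correctly with the intrinsic pairings $\langle a_j,R\rangle,\langle a_{j+1},R\rangle$, and making sure the edge cases (common value $0$ or $1$, or $\bar R_{j,j+1}$ off the $S_2$-ray) are correctly folded into the ``otherwise'' branch. No hard new input is needed; everything reduces to the surface case already established and to a direct unwinding of the definition of $\bar R_{jk}$.
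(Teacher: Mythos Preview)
Your approach is correct and matches the paper's own proof exactly: the paper simply observes that the two-dimensional cone $\langle a_j,a_{j+1}\rangle$ defines the Gorenstein toric surface $\spec(A_{\ell(j)-1})$ and then invokes Proposition~\ref{prop eq pr} (specifically equation~\eqref{eq dimt1}). Your write-up just makes explicit the bookkeeping of how $\bar R_{j,j+1}$ corresponds to $kS_2$ under this identification, which the paper leaves implicit.
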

\begin{proof}
The toric surface $\lan a_j,a_{j+1}\ran$ is isomorphic to the Gorenstein toric  surface $\spec(A_{\ell(j)-1})$ and then the proof follows from Proposition \ref{prop eq pr} (more precisely from equation \eqref{eq dimt1}).
\end{proof}

\begin{lemma}\label{lem pom tr}
If there exists $a_j$ such that $\lan R,a_j\ran\leq 0$, then $s^i_{Q(R)}=0$, otherwise $s^i_{Q(R)}\leq {3\choose i}$.
\end{lemma}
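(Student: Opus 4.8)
The plan is to read off both statements directly from the definition of $s^i_{Q(R)}$ together with Lemma \ref{lem zadazad}. Recall that $s^i_{Q(R)}=\dim_k\wedge^i\big(\bigcap_{d_{jk}\in Q(R)}\sp_kK^R_{jk}\big)$ when $Q(R)$ is compact, and is $0$ otherwise. First I would dispose of the case in which some $a_j$ satisfies $\langle R,a_j\rangle\le 0$: then the ray $\RR_{\ge0}\,a_j\subset\sigma$ either misses $\AA(R)$ or meets it in a point on the hyperplane $\langle\cdot,R\rangle=1$ in a direction that is unbounded inside $Q(R)$, so $Q(R)=\sigma\cap\AA(R)$ is non-compact (more precisely, $\sigma$ is not contained in the open halfspace $\langle\cdot,R\rangle>0$, hence $Q(R)$ contains a ray). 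By the definition, $s^i_{Q(R)}=0$ in this case, which is the first assertion.

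Next I would treat the remaining case, so assume $\langle R,a_j\rangle>0$ for every fundamental generator $a_j$; then $Q(R)$ is compact and $s^i_{Q(R)}=\dim_k\wedge^i(V)$ where $V:=\bigcap_{d_{jk}\in Q(R)}\sp_kK^R_{jk}\subset \Lambda\otimes_{\ZZ}k$. Since $V$ is a subspace of the intersection of finitely many subspaces of the three-dimensional space $M\otimes_{\ZZ}k$ (here $X_\sigma$ is three-dimensional, so $M$ has rank $3$), we get $\dim_k V\le 3$, and therefore $\dim_k\wedge^i V\le\binom{3}{i}$. This gives the bound $s^i_{Q(R)}\le\binom{3}{i}$.

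The only subtle point is making sure that ``$\langle R,a_j\rangle\le 0$ for some $j$'' really does force non-compactness, and conversely is not needed — we only need one direction. I expect the main (minor) obstacle is just being careful about the degenerate situation $\langle R,a_j\rangle=0$, where the ray $\RR_{\ge0}a_j$ lies in the hyperplane $\langle\cdot,R\rangle=0$ parallel to $\AA(R)$: in that case $\sigma$ still contains the translate $a+\RR_{\ge0}a_j$ of that ray for any $a\in Q(R)$ on a face containing $a_j$'s ray's asymptotic direction, so again $Q(R)$ is non-compact. Once this is observed, the dimension count $\dim_k V\le n=3$ finishes the argument; I would cite Lemma \ref{lem zadazad} only if one wants the more refined statement that each $\sp_kK^R_{jk}$ is already low-dimensional, but for the stated inequality the ambient bound $\binom{3}{i}$ suffices.
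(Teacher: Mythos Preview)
Your proposal is correct and follows essentially the same approach as the paper, which simply says the lemma ``follows trivially from definitions.'' You have merely unpacked those definitions: non-compactness of $Q(R)$ when some $\langle a_j,R\rangle\le 0$ gives $s^i_{Q(R)}=0$ by the case distinction in the definition, and otherwise the intersection $V$ sits inside the three-dimensional space $M\otimes_{\ZZ}k$, forcing $\dim_k\wedge^iV\le\binom{3}{i}$; your remark that Lemma~\ref{lem zadazad} is not actually needed for the stated bound is also correct.
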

\begin{proof}
Follows trivially from definitions.
\end{proof}

The next lemma establishes a useful criterion when $T^1_{(i)}(-R)$ is zero in the Gorenstein three-dimensional case.
\begin{lemma}\label{lem lepaz}
Assume that $\lan a_j,R\ran\ne \lan a_{j+1},R \ran$ for all $j=1,...,N$ ($a_{N+1}:=a_1$). Then $T^1_{(i)}(-R)=0$.
\end{lemma}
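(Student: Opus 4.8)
The plan is to reduce the vanishing of $T^1_{(i)}(-R)$ to the formula of Proposition \ref{t1i}, after first checking that its hypothesis (the compact part of $Q(R)$ lies in a two-dimensional affine space) is automatic here. Since $X_\sigma$ is three-dimensional, $Q(R)$ lives in the affine plane $\AA(R)\subset N_\RR$, so the compact part is at most two-dimensional and the formula applies. Thus it suffices to show that, under the assumption $\lan a_j,R\ran\ne \lan a_{j+1},R\ran$ for all $j$, the quantity inside the $\max\{0,\,\cdot\,\}$ is $\le 0$; equivalently,
\begin{equation*}
\sum_{j=1}^N V^i_j(R)-\sum_{d_{jk}\in Q(R)}Q^i_{jk}(R)+s^i_{Q(R)}\le \binom{n}{i}=\binom{3}{i}.
\end{equation*}

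The key observation is that the assumption forces, for every compact edge $d_{j,j+1}$ of $Q(R)$, the two pairing values $\lan a_j,R\ran$ and $\lan a_{j+1},R\ran$ to be distinct. First I would invoke Lemma \ref{lem t1udz}: since that lemma shows $T^1_{\lan a_j,a_{j+1}\ran}(-\bar R_{j,j+1})$ is nonzero only when $\lan a_j,R\ran=\lan a_{j+1},R\ran$, our hypothesis gives $\dim_k T^1_{\lan a_j,a_{j+1}\ran}(-\bar R_{j,j+1})=0$ for all relevant $j$. Hence $Q^i_{j,j+1}(R)=\binom{W_j(R)+W_{j+1}(R)+n-4}{i}=\binom{W_j(R)+W_{j+1}(R)-1}{i}$ whenever $\lan a_j,R\ran,\lan a_{j+1},R\ran\ne 0$. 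Similarly, by Lemma \ref{lem zadazad} the same input gives $\dim_k\sp_k K^R_{j,j+1}=\max\{0,W_j(R)+W_{j+1}(R)-2\}$, which controls $s^i_{Q(R)}$ through Lemma \ref{lem pom tr}: if some $\lan a_j,R\ran\le 0$ then $s^i_{Q(R)}=0$ and $V^i_j(R)=0$ for that $j$, trimming the sum, so one reduces to the case $\lan a_j,R\ran\ge 1$ for all $j$, where $Q(R)=P$-type polygon with all vertices genuine.

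In that remaining case the combinatorics is a bookkeeping argument: label each vertex $\bar a_j$ of the polygon $Q(R)$ by $W_j(R)\in\{1,2\}$ according to whether $\lan a_j,R\ran=1$ or $>1$, note $V^i_j(R)=\binom{W_j(R)+1}{i}$ wait — more precisely $V^i_j(R)=\binom{n}{i}$ or $\binom{n-1}{i}$, i.e. $\binom{W_j(R)+1}{i}$ with $n=3$ — and each compact edge contributes $-\binom{W_j(R)+W_{j+1}(R)-1}{i}$. Walking around the cycle of $N$ vertices and $N$ edges, I would show the telescoping-type inequality
\begin{equation*}
\sum_j \binom{W_j+1}{i}-\sum_j\binom{W_j+W_{j+1}-1}{i}+s^i_{Q(R)}\le\binom{3}{i}
\end{equation*}
by treating $i=1,2,3$ separately (for $i=3$ every binomial coefficient with a small top is $0$ unless all $W_j=2$, which is exactly the compact case where $s^3_{Q(R)}$ can equal $1$; for $i=1,2$ one uses that along the cycle each "step up" in $W$ from $1$ to $2$ is compensated by an adjacent edge term). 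The main obstacle is precisely this last case analysis: getting the edge-versus-vertex tally right when the labels $W_j$ alternate along the polygon, and handling the boundary contribution $s^i_{Q(R)}$ in the compact case. I expect the cleanest route is to split off the unbounded case first (where $s^i_{Q(R)}=0$ and the open edges reduce the vertex count), then for compact $Q(R)$ argue vertex-by-vertex that $\binom{W_j+1}{i}\le\frac12\big(\binom{W_{j-1}+W_j-1}{i}+\binom{W_j+W_{j+1}-1}{i}\big)$ pointwise fails in general, so instead a global count over the whole cycle is needed, bounded against $\binom{3}{i}+s^i_{Q(R)}$.
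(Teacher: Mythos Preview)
Your overall strategy matches the paper's: apply Proposition~\ref{t1i}, use Lemma~\ref{lem t1udz} to kill the $T^1_{\lan a_j,a_{j+1}\ran}$ terms so that $Q^i_{j,j+1}(R)=\binom{W_j+W_{j+1}-1}{i}$, and then bound the resulting alternating sum together with $s^i_{Q(R)}$. Where your proposal parts ways with the paper is the final step, and that is where there is a genuine gap.

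You propose to finish by a purely combinatorial ``global count over the whole cycle'' with labels $W_j\in\{1,2\}$ (in the compact case). This cannot work as stated, because the inequality you want is \emph{false} for some cyclic labelings that are combinatorially admissible. For instance, with $N=4$ and $(W_1,W_2,W_3,W_4)=(1,2,1,2)$ (no two adjacent $1$'s, so the hypothesis is not violated at this combinatorial level) one gets for $i=2$
\[
\sum_j\binom{W_j+1}{2}-\sum_j\binom{W_j+W_{j+1}-1}{2}-\binom{3}{2}+s^2_{Q(R)}
=8-4-3+s^2_{Q(R)}=1+s^2_{Q(R)}\ge 1>0,
\]
since by Lemma~\ref{lem zadazad} every $\sp_kK^R_{j,j+1}$ here is one-dimensional and hence $s^2_{Q(R)}=0$. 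So a telescoping/bookkeeping argument over arbitrary $\{1,2\}$-labelings is doomed.

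What rescues the lemma, and what the paper actually uses, is a geometric constraint you have not invoked: because $X_\sigma$ is Gorenstein the $a_j$ are the vertices of a convex polygon $P$ sitting in the affine plane $\{\lan\,\cdot\,,R^*\ran=1\}$, and $\lan\,\cdot\,,R\ran$ restricts to an affine-linear function there. Hence the level set $\{\lan\,\cdot\,,R\ran=1\}$ is a line, which can contain at most two vertices of the convex polygon $P$. Thus \emph{at most two} indices $j$ satisfy $\lan a_j,R\ran=1$. Together with the hypothesis (which forbids two adjacent such indices), this rules out precisely the bad labelings like $(1,2,1,2)$; in fact, in the compact case it forces $m:=\#\{j:\lan a_j,R\ran=1\}\le 1$, and the remaining cases ($m=0$ or $m=1$, plus the non-compact case where $s^i_{Q(R)}=0$ by Lemma~\ref{lem pom tr}) are then genuinely easy to check. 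This ``at most two'' observation is the missing idea in your plan; once you insert it, the rest of your outline goes through along the same lines as the paper's proof.
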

\begin{proof}
We will use Proposition \ref{t1i}.  By the assumption and Lemma \ref{lem t1udz} we know that $\dim_kT^1_{\lan a_j,a_{j+1}\ran}(-\bar{R}_{j,j+1})=0$ for all $j$. Since $\lan a_j,R\ran=1$ for at most  two $j\in \{1,...,N\}$ we can using Lemma \ref{lem pom tr} easily see  that 
$$\sum_{j=1}^NV^i_j(R)-\sum_{j=1}^N{W_j(R)+W_{j+1}(R)-1\choose i}-{3\choose i}+s^i_{Q(R)}\leq 0,$$
which implies that $T^1_{(i)}(-R)=0$ for all $i$. 
\end{proof}

\begin{lemma}\label{lem silem}
Let $R=qR^*$ for $q\geq 2$. It holds that $\dim_k\cap_j\sp_kE^R_{j,j+1}=3$ if $\ell(j)<q$  for all $j$. Moreover, $\dim_k\cap_j\sp_kE^R_{j,j+1}=2$ if $\ell(j)<q$  for all $j$ except two (denoted by $j_1$ and $j_2$), for which it holds that $d_{j_1}$ and $d_{j_2}$ are parallel (the case $j_1=j_2$ is included).  Otherwise it holds that
$
\dim_k\cap_j\sp_kK^R_{j,j+1}=1.
$
\end{lemma}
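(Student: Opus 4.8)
\textbf{Proof plan for Lemma \ref{lem silem}.}

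The plan is to reduce the computation of $\cap_j \sp_k K^R_{j,j+1}$ to the computation of each $\sp_k K^R_{j,j+1}$ via Lemma \ref{lem zadazad}, and then to track how these subspaces of $\bar N_{j,j+1}\otimes\RR$ sit inside $N_\RR$. First I would note that since $R=qR^*$ with $q\geq 2$ and $\lan a_j,R^*\ran=1$ for all $j$, we have $\lan a_j,R\ran=q>1$ for every $j$, so $W_j(R)=2$ for all $j$; hence by Lemma \ref{lem zadazad}, $\dim_k\sp_k K^R_{j,j+1}=\max\{0,\,2-\dim_k T^1_{\lan a_j,a_{j+1}\ran}(-\bar R_{j,j+1})\}$. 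By Lemma \ref{lem t1udz}, the surface invariant equals $1$ precisely when $q\leq \ell(j)$ and equals $0$ when $\ell(j)<q$. Therefore $\sp_k K^R_{j,j+1}$ is $2$-dimensional (all of $\bar N_{j,j+1}\otimes\RR$, i.e.\ the plane spanned by the facet $\lan a_j,a_{j+1}\ran$) when $\ell(j)<q$, and is a line when $\ell(j)\geq q$.

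Next I would identify, in the case $\ell(j)<q$ for all $j$, that every $\sp_k K^R_{j,j+1}$ is the full span of the corresponding facet. Because $\sigma$ is a full-dimensional cone in $N_\RR$ (dimension $3$), any two distinct facet planes meet in a line, and three facet planes in ``general position'' meet only in $0$; more precisely, since the $a_j$ are arranged in a cycle and span $N_\RR$, the intersection of all facet planes $\lan a_j,a_{j+1}\ran\otimes\RR$ is $\{0\}$. Wait — that would give dimension $0$, not $3$. The resolution is that $K^R_{j,j+1}$ lives in $M$, not $N$: it is a cone of lattice points $r$ with $\lan a_j,r\ran<q$ and $\lan a_{j+1},r\ran<q$, and its span is a subspace of $M_\RR$. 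When $\ell(j)<q$ the slab $\{r: \lan a_j,r\ran<q,\ \lan a_{j+1},r\ran<q\}\cap\Lambda$ is large enough to span all of $M_\RR$ (three-dimensional), so each $\sp_k K^R_{j,j+1}=M_\RR$ and the intersection is $3$-dimensional. When exactly two edges $d_{j_1},d_{j_2}$ have $\ell(j_i)\geq q$ and are parallel, the two slabs cut out the \emph{same} hyperplane direction in $M_\RR$, so $\cap_j\sp_k K^R_{j,j+1}$ drops by exactly one, giving dimension $2$; if they are not parallel, or if some third edge also has $\ell\geq q$, the hyperplanes are in general position and the intersection is a line (dimension $1$), and one checks it is never $0$ because $R^*$ itself — or rather a suitable lattice multiple — survives all the inequalities since $\lan a_j,R^*\ran=1<q$.

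The key steps in order: (1) compute $W_j(R)=2$ for all $j$ from $R=qR^*$; (2) invoke Lemmas \ref{lem zadazad} and \ref{lem t1udz} to get $\dim \sp_k K^R_{j,j+1}\in\{1,2\}$ according to whether $\ell(j)\geq q$ or $\ell(j)<q$; (3) interpret $\sp_k K^R_{j,j+1}$ as a subspace of $M_\RR$: all of $M_\RR$ when $\ell(j)<q$, and a specific hyperplane (determined by the direction $d_j$, i.e.\ $(d_j)^\perp\subset M_\RR$) when $\ell(j)\geq q$; (4) intersect: no constraints gives $M_\RR$ (dimension $3$); one constraint, or two from parallel edges, gives a single hyperplane (dimension $2$); two non-parallel constraints or more give their intersection, a line (dimension $1$); (5) confirm the intersection is never $0$ by exhibiting $R^*\in M_\RR$ in every hyperplane $(d_j)^\perp$ — indeed $\lan d_j,R^*\ran=\lan a_{j+1}-a_j,R^*\ran=1-1=0$. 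The main obstacle I anticipate is step (3): pinning down exactly which hyperplane $\sp_k K^R_{j,j+1}$ is when $\ell(j)\geq q$, and verifying that when $\ell(j)<q$ the lattice points in the slab genuinely span all of $M_\RR$ rather than a hyperplane — this requires a careful look at the width of the slab relative to the cone $\Lambda$, using $\ell(j)<q=\lan a_j,R^*\ran\cdot q$ and the Gorenstein normalization. Once the correct hyperplane is identified as $(d_j)^\perp$, steps (4)–(5) are linear algebra, with the ``parallel'' hypothesis precisely encoding $(d_{j_1})^\perp=(d_{j_2})^\perp$.
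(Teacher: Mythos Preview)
Your steps (3)--(5) are exactly the paper's argument: the paper computes directly from the definition that $\sp_k K^R_{j,j+1}$ equals $M\otimes_\ZZ k$ when $\ell(j)<q$ and equals $\sp_k\{a_j^\perp\cap a_{j+1}^\perp,R^*\}=\{c\in M\otimes_\ZZ k\mid \lan c,a_j\ran=\lan c,a_{j+1}\ran\}$ when $\ell(j)\geq q$, which is precisely your hyperplane $(d_j)^\perp$. The intersection analysis and the observation that $R^*\in(d_j)^\perp$ for every $j$ then finish the proof, just as you outline.

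The detour through Lemma \ref{lem zadazad} in your step (2), however, is misconceived and creates an internal inconsistency you never resolve. That lemma is stated for the two-dimensional toric surface $\lan a_j,a_{j+1}\ran\subset \bar N_{j,j+1}$, so the span it computes lives in the two-dimensional dual $\bar M_{j,j+1}$; applying it verbatim gives $\dim\in\{1,2\}$, whereas in the three-dimensional ambient $M_\RR$ the correct dimensions are $\{2,3\}$, as you yourself assert in step (3). You silently jump from one to the other without explaining the discrepancy. The fix is simply to drop step (2) entirely: the paper does not invoke Lemmas \ref{lem zadazad} or \ref{lem t1udz} here at all, but computes $\sp_k K^R_{j,j+1}\subset M_\RR$ directly from the definition $K^R_{j,j+1}=K^R_{a_j}\cap K^R_{a_{j+1}}=\{r\in\Lambda\mid \lan a_j,r\ran<q,\ \lan a_{j+1},r\ran<q\}$. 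That direct computation is also what makes your anticipated ``main obstacle'' in step (3) disappear --- once you write down the slab explicitly, the case split on $\ell(j)\lessgtr q$ is immediate.
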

\begin{proof}
By definition we easily see that
\begin{equation}\label{eq in lem}
\sp_kK^R_{j,j+1}=
\left\{
\begin{array}{cc}
M\otimes_\ZZ k&\text{ if }\ell(j)< q\\
\sp_k\{a_j^\perp\cap a_{j+1}^\perp,R^*\}&\text{ if }\ell(j)\geq  q. 
\end{array}
\right.
\end{equation}
Since $\sp_k\{a_j^\perp\cap a_{j+1}^\perp,R^*\}=\{c\in M\otimes_\ZZ k~|~\lan c,a_j \ran=\lan c,a_{j+1} \ran\}$ we see that $\sp_k\{a_{j_1}^\perp\cap a_{{j_1}+1}^\perp,R^*\}=\sp_k\{a_{j_2}^\perp\cap a_{{j_2}+1}^\perp,R^*\}$ for $j_1,j_2\in \{1,..,N\}$ if and only if $j_1=j_2$ or $d_{j_1}$ is parallel to $d_{j_2}$. Thus we can easily conclude the proof.
\end{proof}

Let $\int(\sigma^\vee)$ denotes the interior of $\sigma^\vee$.

\begin{theorem}\label{cor prop gor}
Let $X_\sigma$ be a three-dimensional affine toric Gorenstein variety. 
The following holds:
\begin{enumerate}
\item 
 $T^1_{(1)}(-R)$ is non-trivial in the following cases:
\begin{enumerate}[(a)]
\item $R=R^*$ with $\dim_kT^1_{(1)}(-R)=N-3$,

\item $R=qR^*$ (for $q\geq 2$) with $\dim_kT^1_{(1)}(-R)=\max\{0,\#\{j~|~q\leq \ell(j)\}-2\}$,

\item $R=qR^*-ps_j$ with $2\leq q \leq \ell(j)$ and $p\in \ZZ$ sufficiently large such that $R\not \in \text{int}(\sigma^{\vee})$. In this case $\dim_kT^1_{(1)}(-R)=1$.

\end{enumerate}

Additional degrees exist only in the following two (overlapping) exceptional cases:

\begin{enumerate}[(d)]

\item $P$ contains a pair of parallel edges $d_j$, $d_k$, both longer than every other edge. Then $\dim_kT^1_{(1)}(-qR^*)=1$ for $q$ in the range 
$$\max\{\ell(l)~|~l\ne j,k\}<q\leq \min\{\ell(j),\ell(k)\}\},$$

\item[(e)] $P$ contains a pair of parallel edges $d_j$, $d_k$ (with distance $d=\lan a_j,s_k \ran=\lan a_k,s_j \ran$). In this case $\dim_kT^1_{(1)}(-R)=1$ for $R=qR^*+ps_j$ with $1\leq q\leq \ell(j)$ and $1\leq p\leq (\ell(k)-q)/d$.
\end{enumerate}

\item
$T^1_{(2)}(-R)$ is non-trivial in the following cases:
\begin{enumerate}[(a)]
\item $R=R^*$ with $\dim_kT^1_{(2)}(-R)=N-3,$

\item $R=qR^*$ (for $q\geq 2$) with $\dim_kT^1_{(2)}(-R)=\max\{0,2\cdot \#\{j~|~q\leq \ell(j)\}-3\}$,

\item $R=qR^*-ps_j$ with $2\leq q \leq \ell(j)$ and $p\in \ZZ$ sufficiently large such that $R\not \in \text{int}(\sigma^{\vee})$. In this case $\dim_kT^1_{(2)}(-R)=2$.

\end{enumerate}
Additional degrees exist only in the following two (overlapping) exceptional cases:

\begin{enumerate}[(d)]

\item $P$ contains a pair of parallel edges $d_j$, $d_k$, both longer than every other edge. Then $\dim_kT^1_{(2)}(-qR^*)=2$ for $q$ in the range 
$$\max\{\ell(l)~|~l\ne j,k\}<q\leq \min\{\ell(j),\ell(k)\}\},$$

\item[(e)] 
$P$ contains a pair of parallel edges $d_j$, $d_k$ (with distance $d=\lan a_j,s_k\ran=\lan a_k,s_j \ran$). In this case $\dim_kT^1_{(2)}(-R)=2$ for $R=qR^*+ps_j$ with $1\leq q\leq \ell(j)$ and $1\leq p\leq (\ell(k)-q)/d$. 
\end{enumerate}

\item
$T^1_{(3)}(-R)$ is non-trivial in the following cases:

\begin{enumerate}[(b)]
\item $R=qR^*$ (for $q\geq 2$) with $\dim_kT^1_{(3)}(-R)=\max\{0,\#\{j~|~q\leq \ell(j)\}-1\}$,

\item[(c)] $R=qR^*-ps_j$ with $2\leq q \leq \ell(j)$ and $p\in \ZZ$ sufficiently large such that $R\not \in \text{int}(\sigma^{\vee})$. In this case $\dim_kT^1_{(3)}(-R)=1$.
\end{enumerate}
Additional degrees exist only in the following two (overlapping) exceptional cases:

\begin{enumerate}[(d)]

\item $P$ contains a pair of parallel edges $d_j$, $d_k$, both longer than every other edge. Then $\dim_kT^1_{(3)}(-qR^*)=1$ for $q$ in the range 
$$\max\{\ell(l)~|~l\ne j,k\}<q\leq \min\{\ell(j),\ell(k)\}\},$$

\item[(e)] 
$P$ contains a pair of parallel edges $d_j$, $d_k$ (with distance $d=\lan a_j,s_k \ran=\lan a_k,s_j \ran$). In this case $\dim_kT^1_{(3)}(-R)=1$ for $R=qR^*+ps_j$ with $1\leq q\leq \ell(j)$ and $1\leq p\leq (\ell(k)-q)/d$.
\end{enumerate}

\item
We have $T^1_{(i)}(-R)=0$ for $i\geq 4$.
\end{enumerate}
\end{theorem}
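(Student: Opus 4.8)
The plan is to evaluate the formula of Proposition \ref{t1i} on each degree $R\in M$. One first records that the formula applies to \emph{every} $R$: since $X_\sigma$ is three-dimensional the affine space $\AA(R)$ is two-dimensional, hence so is the compact part of $Q(R)\subset\AA(R)$. Part (4) is then immediate. For $i\ge 4$ every $V^i_j(R)$ is one of $\binom3i,\binom2i,0$ and hence vanishes; every $Q^i_{jk}(R)$ is a binomial coefficient $\binom mi$ with $m\le W_j(R)+W_k(R)-1\le 3<i$ and hence vanishes; and $s^i_{Q(R)}\le\binom3i=0$ by Lemma \ref{lem pom tr}. So the bracket in Proposition \ref{t1i} is $0$ and $T^1_{(i)}(-R)=0$.

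For $i\in\{1,2,3\}$ I would organise the computation by the shape of the integer vector $(u_1,\dots,u_N)$ with $u_j:=\langle a_j,R\rangle$. By Lemma \ref{lem lepaz} one may assume $u_j=u_{j+1}$ for at least one $j$ (indices modulo $N$). The key geometric point is that, by the Gorenstein hypothesis (all $a_j$ lie on the hyperplane $\langle\,\cdot\,,R^*\rangle=1$, where they are exactly the vertices of $P=Q(R^*)$, and $R^*=(\underline{0},1)$), the map $j\mapsto u_j$ is the restriction of an affine functional on $N_\RR$ to the cyclically ordered vertices $a_1,\dots,a_N$ of $P$; hence $u_j=u_{j+1}$ precisely when this functional is constant on the edge $d_j$, and an affine functional on a convex polygon is constant on at most two of its edges, which in that case must form a parallel pair. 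The relevant degrees therefore split into three families: (I) the functional is constant on $P$, i.e.\ $R=qR^*$; (II) it is constant on exactly one edge $d_j$, with $d_j$ not parallel to any other edge, which forces $R$ onto the line $\{qR^*+ps_j:p\in\ZZ\}$ (recall $\langle a_j,s_j\rangle=\langle a_{j+1},s_j\rangle=0$); (III) it is constant on a parallel pair $d_j\parallel d_k$.

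In each family one substitutes the explicit $V^i_j(R)$ and $W_j(R)$, the value of $\dim_kT^1_{\langle a_j,a_{j+1}\rangle}(-\bar R_{j,j+1})$ from Lemma \ref{lem t1udz}, and the value of $s^i_{Q(R)}$ (from Lemma \ref{lem pom tr} when $R\notin\int(\sigma^\vee)$, and from Lemma \ref{lem silem} or its method otherwise), and reads off the answer. For $R=qR^*$ every $u_j=q$: when $q=1$ one gets $W_j=1$, $V^i_j=\binom2i$, all $\dim_kT^1_{\langle a_j,a_{j+1}\rangle}(-\bar R_{j,j+1})=0$, and $s^i_{Q(R^*)}=0$ because the lines $\sp_kK^{R^*}_{j,j+1}=\sp_k\{s_j\}$ are pairwise distinct and so intersect trivially, whence $\dim_kT^1_{(i)}(-R^*)=\max\{0,\binom2iN-\binom1iN-\binom3i\}$, equal to $N-3$ for $i=1,2$ and $0$ for $i=3$ (case (a)); when $q\ge2$ one gets $W_j=2$, $V^i_j=\binom3i$, $Q^i_{j,j+1}=\binom{3-\epsilon_j}i$ with $\epsilon_j=1$ iff $q\le\ell(j)$, and $s^i_{Q(qR^*)}=\binom si$ with $s\in\{1,2,3\}$ as in Lemma \ref{lem silem}; collecting terms yields the formula of case (b) and, precisely when $s=2$ (Lemma \ref{lem silem}: a parallel pair longer than all other edges, $q$ in the stated range), the extra classes of case (d). For the degrees of families (II),(III) lying outside $\int(\sigma^\vee)$, with $p$ so large that $u_l\le0$ for every $l\ne j,j+1$: the only compact edge is $d_{j,j+1}$, so $s^i_{Q(R)}=0$ by Lemma \ref{lem pom tr}, $W_j=W_{j+1}=2$, and $\dim_kT^1_{\langle a_j,a_{j+1}\rangle}(-\bar R_{j,j+1})=1$ because $2\le q\le\ell(j)$; then the bracket equals $2\binom3i-\binom2i-\binom3i=\binom3i-\binom2i$, i.e.\ $1,2,1$ for $i=1,2,3$, which is case (c).

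The hard part will be two things. First, \emph{exhaustiveness}: one must show that every degree not in (a)--(e) gives bracket $\le 0$. Since a flat edge of the functional $j\mapsto u_j$ sits at its maximum or minimum, at most two occur, and all remaining degrees lie on the lines described in (II)/(III); for such an $R$ inside or on the boundary of $\sigma^\vee$ but outside the listed ranges one checks, using Lemma \ref{lem zadazad} to bound $s^i_{Q(R)}$ and Lemma \ref{lem t1udz} for the surface terms, that the loss $\sum_{d_{jk}\in Q(R)}Q^i_{jk}(R)+\binom3i$ dominates $\sum_jV^i_j(R)+s^i_{Q(R)}$. Second, the exceptional family (III) with $R\in\int(\sigma^\vee)$, i.e.\ cases (d),(e): here $R=qR^*+ps_j$ with $d_j\parallel d_k$, both flat edges contribute a surface term via Lemma \ref{lem t1udz}, and $s^i_{Q(R)}$ must be computed by a direct argument in the spirit of Lemma \ref{lem silem}; the stated ranges for $q$ and $p$ then come out of the inequalities $2\le u_j\le\ell(j)$, $2\le u_k\le\ell(k)$ together with $u_k=u_j+pd$, where $d$ is the lattice distance between the two parallel edges. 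Delimiting exactly what ``$p$ sufficiently large'' means in (c) and the precise bounds in (d),(e) is the subtlest bookkeeping; everything else is a direct substitution into Proposition \ref{t1i} from the lemmas already established.
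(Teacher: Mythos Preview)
Your proposal is correct and follows essentially the same route as the paper: a case-by-case evaluation of the formula in Proposition \ref{t1i}, with Lemma \ref{lem lepaz} to discard most degrees, Lemma \ref{lem t1udz} to compute the surface contributions, and Lemmas \ref{lem zadazad}--\ref{lem silem} to control $s^i_{Q(R)}$. Your organising principle---viewing $j\mapsto\langle a_j,R\rangle$ as the restriction of an affine functional on the polygon $P$ and noting that it can be constant on at most one parallel pair of edges---is a slightly cleaner way to phrase why the relevant degrees are exactly the lines $qR^*+\ZZ s_j$, but the resulting case split (your families (I)--(III)) matches the paper's division into $R=R^*$, $R=qR^*$ with $q\ge2$, $R\notin\int(\sigma^\vee)$, and $R\in\int(\sigma^\vee)\setminus\{qR^*\}$, and the ensuing arithmetic is identical.
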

\begin{proof}
We distinguish the following cases.
\begin{itemize}
\item Let $R=R^*$. 
In this case we have $\lan a_j,R\ran$=1 and $\sp_kK^R_{a_j}=(a_j)^\perp$ for all $j$. Thus 
 $s^i_{Q(R)}=0$ for all $i$ and by Lemma \ref{lem t1udz} we have $T^1_{\lan a_j,a_{j+1}\ran}(-\bar{R}_{j,j+1})=0$ for all $j$. Moreover, 
 $$\sum_{j=1}^NV^i_j(R)-\sum_{d_{jk}\in Q(R)}Q^i_{jk}(R)=N{2\choose i}-N{1\choose i}.$$
  From Proposition \ref{t1i} it follows that $\dim_kT^1_{(1)}(-R^*)=\dim_kT^1_{(2)}(-R^*)=N-3$ and $T^1_{(i)}(-R^*)=0$ for $i>2$. Thus we proved (a) cases (note that $T^1_{(3)}(-R^*)=0$ and thus (a) case does not appear in the case (3)).

\item Let $R=qR^*$, where $q\geq 2$.

In this case we have $\lan a_j,R\ran\geq 2$.
Thus $\sum_{j=1}^NV^i_j(R)={3\choose i}N$. 
Let us define $v:=\#\{j~|~q\leq \ell(j)\}$.

For $i=1$ 
we have $\sum_{j=1}^NQ^1_{j,j+1}(R)=3N-v$ (since for $q\leq \ell(j)$ we have $\dim_kT^1_{\lan a_j,a_{j+1}\ran}(-\bar{R}_{j,j+1})=1$ by Lemma \ref{lem t1udz}). 
Thus  $\dim_kT^1_{(1)}(-R)=v-3+s^1_{Q(R)}$ holds by Proposition \ref{t1i}.  If $s^1_{Q(R)}=1$ (see Lemma \ref{lem silem} when this holds) we obtain the case (1b). There is an exceptional case for which $\dim_kT^1_{(1)}(-R)\ne 0$: this appears if $v=2$ and $s^1_{Q(R)}=2$. From Lemma \ref{lem silem} we see that this happens for $q$ in the range 
$$\max\{\ell(l)~|~l\ne j,k\}<q\leq \min\{\ell(j),\ell(k)\}\}.$$
Thus we proved the case (1d).

In the case $i=2$ we have $\sum_{j=1}^NQ^2_{j,j+1}(R)={2\choose 2}v+{3\choose 2}(N-v)=3N-2v$. Thus Proposition \ref{t1i} gives us that $\dim_kT_{(2)}^1(-R)=2v-3+s^2_{Q(R)}$. As in the case $i=1$ we obtain from Lemma \ref{lem silem} the cases (2b) and (2d).

For $i=3$ we have $\sum_{j=1}^NQ^3_{j,j+1}(R)=N-v$. By Proposition \ref{t1i} we have $\dim_kT_{(3)}^1(-R)=v-1+s^3_{Q(R)}$. As in the case $i=1$ we obtain from Lemma \ref{lem silem} the cases (3b) and (3d).

\item Let $R\not \in \int(\sigma^\vee)$. By Lemma \ref{lem lepaz} we see that the only possible cases for having a non-zero $T^1_{(i)}(-R)$ occur when $\lan a_j,R\ran=\lan a_{j+1},R\ran> 0$ for some $j\in \{1,...,N\}$ and $\lan a_l,R\ran\leq 0$ for all other $l$. This happens for
$R=qR^*-ps^j$ with $q\geq 1$ and $p\in \ZZ$ sufficiently large such that $R\not \in \text{int}(\sigma^{\vee})$. In this case we have $\lan a_j,R\ran=\lan a_{j+1},R\ran=q$ and $\lan a_l,R\ran\leq 0$ for other $l$. 
If $q=1$, then by Lemma \ref{lem t1udz} it holds that $T^1_{\lan a_j,a_{j+1} \ran}(-\bar{R}_{j,j+1})=0$ and thus by Proposition \ref{t1i} we have 
$$\dim_kT^1_{(i)}(-R)=\max\{0,2{2\choose i}-{1\choose i}-{3\choose i}\}=0$$
for all $i$. If $q\geq 2$, then using Lemma \ref{lem t1udz} we see that
$$\dim_kT^1_{(i)}(-R)=
\left\{
\begin{array}{cc}
2{3\choose i}-{2\choose i}-{3\choose i}&\text{ if }2\leq q\leq \ell(j) \\
2{3\choose i}-{3\choose i}-{3\choose i}=0&\text{ if }q>\ell(j).
\end{array}
\right.$$
In the cases $2\leq q\leq \ell(j)$ we see that $\dim_kT^1_{(1)}(-R)=\dim_kT^1_{(3)}(-R)=1$, $\dim_kT^1_{(2)}(-R)=2$ and $\dim_kT^1_{(i)}(-R)=0$ for $i\geq 4$. This proves (c) cases.

\item Let $R\in \int(\sigma^\vee)$ and $R\ne qR^*$ for some $q\geq 1$. 
By Lemma \ref{lem lepaz} it follows that $T^1_{(i)}(-R)=0$ for all $R$, except maybe for $R=qR^*+ps_j$ for some $j$ since in this case we have $\lan a_j,R\ran=\lan a_{j+1},R\ran=q$. 

Let us first assume that $q\geq 2$. In this case we have that
$$\dim_kT^1_{(i)}(-R)=N{3\choose i}-\sum_{l=1}^N{3-\dim_kT^1_{\lan a_l,a_{l+1}\ran}(-\bar{R}_{l,l+1})\choose i}-{3\choose i}+s^i_{Q(R)}.$$
We see that $T^1_{(i)}(-R)=0$ if $T^1_{\lan a_l,a_{l+1}\ran}(-\bar{R}_{l,l+1})=0$ for all $l\in \{1,...,N\}$ since $s^i_{Q(R)}\leq {3\choose i}$. If only $T^1_{\lan a_j,a_{j+1}\ran}(-\bar{R}_{j,j+1})\ne 0$ we still obtain $T^1_{(i)}(-R)=0$ since in this case $s^i_{Q(R)}\leq {2\choose i}$ (by Lemma \ref{lem zadazad}). 
Thus we see that the only case to obtain nontrivial $T^1_{(i)}(-R)$ is when there exist parallel edges $d_j$, $d_k$ (with distance $d=\lan a_j,s_k \ran=\lan a_k,s_j \ran$). We have $\dim_kT^1_{(i)}(-R)\ne 0$ for $R=qR^*+ps_j$ with $2\leq q\leq \ell(j)$ and $1\leq p\leq (\ell(k)-q)/d$ since in this case $\dim_kT^1_{\lan a_j,a_{j+1}\ran}(-\bar{R}_{j,j+1})=\dim_kT^1_{\lan a_k,a_{k+1}\ran}(-\bar{R}_{j,j+1})=1$ by Lemma \ref{lem t1udz}.
Thus we have
$$\dim_kT^1_{(i)}(-R)=N{3\choose i}-\big(2{2\choose i}+(N-2){3\choose i}\big)-{3\choose i}+{2\choose i}={3\choose i}-{2\choose i}.$$
We see that in this case $\dim_kT^1_{(1)}(-R)=\dim_kT^1_{(3)}(-R)=1$ and $\dim_kT^1_{(2)}(-R)=2$. Similarly we can treat the case $q=1$ and thus finish the proof.
\end{itemize}
\end{proof}

\begin{remark}
Note that in the case $i=1$ our formulas agree with the ones given in \cite[Theorem 4.1]{alt}, which were obtained by different methods. 
\end{remark}

\begin{example}
Let $X_\sigma$ be as in Example \ref{ex prvi}. From Theorem \ref{cor prop gor} we obtain that
if $R\in \{2R^*-\alpha s_3, 2R^*-\beta s_1, 2R^*-\gamma s_1~|~\alpha\geq 1, \beta\geq 1, \gamma \geq 2\}$, then $\dim_kT^1_{(1)}(-R)=\dim_kT^1_{(3)}(-R)=1$ and $\dim_kT^1_{(2)}(-R)=2$. For other degrees $S\in M$ we have $T^1_{(i)}(-S)=0$ for all $i\geq 1$.
\end{example}

\begin{corollary}\label{last zad cor}
Let $\spec(A)$ be a three-dimensional affine toric Gorenstein variety. 
The Hodge decomposition gives us
$$\HH^2(A)\cong T^1_{(1)}(A)\oplus T^0_{(2)}(A),$$
$$\HH^3(A)\cong T^2_{(1)}(A)\oplus T^0_{(3)}(A)\oplus T^1_{(2)}(A).$$
Descriptions of $T^0_{(2)}(A)$ and $T^0_{(3)}(A)$ were given in \cite{fil}. The module $T^2_{(1)}(A)$ was analysed in \cite[Corollary 5.4]{klaus}. Theorem \ref{cor prop gor} gives us an explicit description of $T^1_{(1)}(A)$ and $T^1_{(2)}(A)$ and thus we complete understanding of the second (which describes the first order associative non-commutative deformations) and third Hochschild cohomology group (which contains the obstructions for extending associative non-commutative deformations to larger base spaces).
\end{corollary}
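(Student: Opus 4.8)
The statement is a bookkeeping consequence of the Hodge decomposition recalled in Subsection~2.2 together with the identification $H^n_{(i)}(A)\cong T^{n-i}_{(i)}(A)$, valid for $n\geq i$ since $\mathrm{char}\,k=0$. The plan is: first write out the decomposition in cohomological degrees $2$ and $3$; then translate each homogeneous summand into an André--Quillen group; finally invoke the descriptions of those groups that are already available (Theorem~\ref{cor prop gor} and the cited references).

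First I would apply $\HH^n(A)\cong\bigoplus_{i=1}^{n}H^n_{(i)}(A)$ with $n=2$ and $n=3$, which gives $\HH^2(A)\cong H^2_{(1)}(A)\oplus H^2_{(2)}(A)$ and $\HH^3(A)\cong H^3_{(1)}(A)\oplus H^3_{(2)}(A)\oplus H^3_{(3)}(A)$; here one only has to check that the index $i$ ranges over $\{1,2\}$, resp. $\{1,2,3\}$, so that no summand is dropped. Applying $H^n_{(i)}(A)\cong T^{n-i}_{(i)}(A)$ termwise yields $H^2_{(1)}(A)\cong T^1_{(1)}(A)$, $H^2_{(2)}(A)\cong T^0_{(2)}(A)$ and $H^3_{(1)}(A)\cong T^2_{(1)}(A)$, $H^3_{(2)}(A)\cong T^1_{(2)}(A)$, $H^3_{(3)}(A)\cong T^0_{(3)}(A)$, which are precisely the two displayed isomorphisms.

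It then remains to record where each piece is analysed: $T^0_{(2)}(A)$ and $T^0_{(3)}(A)$ are described in \cite{fil}, and $T^2_{(1)}(A)$ in \cite[Corollary~5.4]{klaus}. For $T^1_{(1)}(A)$ and $T^1_{(2)}(A)$ I would use that the Hochschild complex $C^{\kbb}_{(i)}$ is $M$-graded with differentials respecting the grading (as noted in Subsection~4.1), so that $T^1_{(i)}(A)\cong\bigoplus_{R\in M}T^1_{(i)}(-R)$ as $M$-graded $A$-modules; Theorem~\ref{cor prop gor}(1)--(2) then lists exactly the degrees $R$ in which $T^1_{(1)}(-R)$, resp. $T^1_{(2)}(-R)$, is nonzero, together with their dimensions. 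Assembling the homogeneous pieces gives the two modules, hence the full structure of $\HH^2(A)$ and $\HH^3(A)$.

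There is essentially no obstacle: all substantive input is either recalled in Section~2 or already proved in Theorem~\ref{cor prop gor}. The only point needing a little care is the bookkeeping around Theorem~\ref{cor prop gor}, namely to confirm that the families (a)--(e) in parts (1) and (2), with their overlaps, account for every nonzero graded piece of $T^1_{(1)}(A)$ and $T^1_{(2)}(A)$ without double counting, so that the resulting description of $\HH^2(A)$ and $\HH^3(A)$ is complete.
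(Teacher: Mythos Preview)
Your proposal is correct and matches the paper's approach exactly: the corollary in the paper is stated without a separate proof, since the displayed decompositions follow immediately from the Hodge decomposition $\HH^n(A)\cong\bigoplus_{i=1}^n H^n_{(i)}(A)$ together with $H^n_{(i)}(A)\cong T^{n-i}_{(i)}(A)$, and the remainder of the statement simply points to where each summand is described. Your write-up makes this bookkeeping explicit, which is all that is required.
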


\section*{Acknowledgements}
I would like to thank to my PhD advisor Klaus Altmann for his constant support and many useful conversations.

\end{document}